\theoremstyle{plain}
\newtheorem{theorem}{Theorem}
\newtheorem{corollary}[theorem]{Corollary}
\newtheorem{observation}[theorem]{Observation}
\newcommand{\lmaj}{\ensuremath{\succcurlyeq}}
\newcommand{\rmaj}{\ensuremath{\preccurlyeq}}
\newcommand{\fc}{\ensuremath{\mathcal{F}}}
\newcommand{\fb}{\ensuremath{\mathcal{B}}}
\newcommand{\seq}[1]{\ensuremath{\left( #1 \right)}}
\newcommand{\edge}[2]{\ensuremath{v_{#1}v_{#2}}}
\newcommand{\diam}{\operatorname{diam}}
\title{
Forced Edges and Graph Structure
\footnote{Official contribution of the National Institute of Standards and Technology; not subject to copyright in the United States.}
}
\author{Brian~Cloteaux \\
\small National Institute of Standards and Technology,\\
\small Applied and Computational Mathematics Division,\\
\small Gaithersburg, MD \\
\texttt{brian.cloteaux@nist.gov}}
\date {}
\begin{document}

\maketitle

\begin{abstract}
For a degree sequence, we define the set of edges that appear in
every labeled realization of that sequence as forced, while the edges
that appear in none as forbidden. We examine structure of graphs whose degree
sequences contain either forced or forbidden edges.
Among the things we show, we determine the structure of the forced or
forbidden edge sets, the relationship between the sizes of forced and forbidden
sets for a sequence, and the resulting structural consequences to their
realizations.  This includes showing that the diameter of every realization of
a degree sequence containing forced or forbidden edges is no greater
than 3, and that these graphs are maximally edge-connected.
\end{abstract}

\section{Introduction}
A degree sequence $\alpha$ is a sequence of  non-negative integers
$(\alpha_1, ..., \alpha_n)$  where 
there exists a simple and undirected graph $G$ whose node degrees correspond
with the values in $\alpha$.
For any simple, undirected graph $G = (V,E)$,
where $V$ is a set of vertices and $E$ is a set of edges, with node degrees
$\alpha$, $G$ is termed a {\it realization} of $\alpha$.
We use the standard notation of $n$ and $m$ to represent number of nodes
and edges respectively.  For this article, we assume that there is at
least one realization for each sequence $\alpha$, i.e. each sequence is
{\it graphic}.

A degree sequence may have a number of labeled realizations.  If an edge
appears between two labeled nodes for every realization, we denote 
that edge as {\it forced} for the degree sequence.
Conversely, if an edge never occurs in any labeled realization of $\alpha$,  we
denote that edge as {\it forbidden} for $\alpha$.
The simplest example of
a forced edge is when there is a dominating value in a sequence $\alpha$,
i.e. $\alpha_i = n-1$.  Then for every realization of $\alpha$, there
must be edges from the vertex $\alpha_i$ to every other vertex in the
graph.  Likewise, an empty value, where $\alpha_i = 0$, causes forbidden edges 
between $\alpha_i$ and  all the other vertices.  A non-trivial example of a
forced edge for a degree sequence is shown in
Figure \ref{fig:forced_edge_example}.

\begin{figure}
\begin{center}

\begin{subfigure}[b]{.4\textwidth}
\begin{tikzpicture}
\begin{scope}[every node/.style={circle,draw,fill=white,minimum size=1mm,inner sep=1pt}]
\node (v1) at (1,2) {$1$};
\node (v2) at (1,1) {$2$};
\node (v3) at (1,0) {$3$};
\node (v4) at (0,1) {$4$};
\node (v5) at (2,1) {$5$};
\node (v6) at (1,3) {$6$};
\end{scope}
\draw (v1) -- (v2);
\draw (v1) -- (v4);
\draw (v1) -- (v5);
\draw (v1) -- (v6);
\draw (v2) -- (v3);
\draw (v2) -- (v4);
\draw (v2) -- (v5);
\draw (v3) -- (v4);
\draw (v3) -- (v5);
\end{tikzpicture}
\end{subfigure}
\hspace{5mm}
\begin{subfigure}[b]{.4\textwidth}
\begin{tikzpicture}
\begin{scope}[every node/.style={circle,draw,fill=white,minimum size=1mm,inner sep=1pt}]
\node (v1) at (0,2) {$1$};
\node (v2) at (2,2) {$2$};
\node (v3) at (1,1) {$3$};
\node (v4) at (1,0) {$4$};
\node (v5) at (1,3) {$5$};
\node (v6) at (1,4) {$6$};
\end{scope}
\draw (v1) -- (v2);
\draw (v1) -- (v3);
\draw (v1) -- (v4);
\draw (v1) -- (v5);
\draw (v2) -- (v3);
\draw (v2) -- (v4);
\draw (v2) -- (v5);
\draw (v3) -- (v4);
\draw (v5) -- (v6);
\end{tikzpicture}
\end{subfigure}

\end{center}
\caption{ Every labeled realization of the degree sequence
$\alpha = \seq{4,4,3,3,3,1}$ is isomorphic to one of these two graphs. Since
there is always an edge between the two degree 4 nodes, then we say
that the edge $\edge{1}{2}$ as forced for $\alpha$.}
\label{fig:forced_edge_example}
\end{figure}
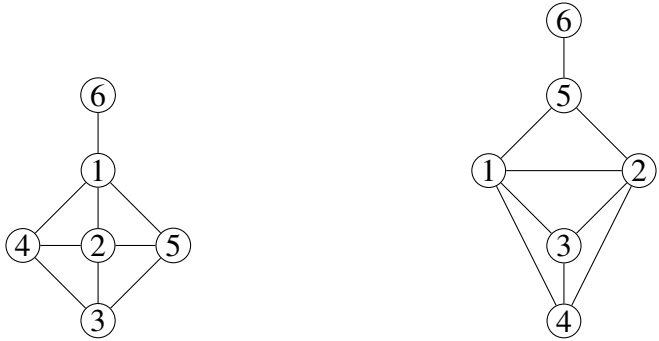

Consideration of the forced and forbidden edges for a degree sequence has both
algorithmic and theoretical applications.  For instance, the
creation of random graphs from a given degree sequence is useful for tasks
from counting graphs with a given degree sequence to creating models of
networks.  There are two principle approaches to creating a random
realization.  The most common method involves a Monte Carlo Markov Chain
(MCMC) approach, where one starts with a initial non-random realization
for a degree sequence, and then performing a random walk to a new
realization through a series of edge switches \cite{Greenhill:2015}.
Obviously, if a edge is forced, then it is in every realization, and can never
be swapped out during the random walk. For some realizations, it can
computationally expensive to find edges to switch.  We can optimize this edge
selection process by excluding forced edges.

A more striking example  is in the creation of a sequential
importance sampling (SIS) method for creating random realizations. In
the SIS approach, edge are randomly selected until a realization is
built.  The difficulty with this approach is that while selecting edges
it is possible to become stuck.  In other words, we can create a partial
graph in which it is impossible to complete into a realization for the
given degree sequence.  The first algorithm to overcome this difficulty
was proposed by Blitzstein and Diaconis \cite{Blitzstein:2010}.  Their
breakthrough idea was to show that by carefully selecting the edges
that are not forbidden, a realization can always be created.

A recent application area where forced edges provide a fundamental
limitation is in degree sequence packing \cite{Busch:2012,Yin:2016}.
The degree sequence packing problem is to determine whether for two
degree sequences, $\alpha$ and $\beta$, labeled realizations exist for
both sequences that are edge-disjoint. 
Obviously, if the two sequences contain the same forced edge then the
two sequences cannot pack.  For example, the sequence
$\seq{4,4,4,1,1,1,1,1,1}$
cannot be packed with sequence $\seq{0,1,1,0,0,0,0,0}$, since both sequences
have the forced edge $\edge{2}{3}$.

\section{Basic Definitions and Results}

We begin with some needed formal definitions and results.
A {\it degree sequence} $\alpha = \seq{\alpha_1, \alpha_2, ..., \alpha_{n}} $
is a set of non-negative integers such that
$n-1 \geq \alpha_1 \geq \alpha_2 \geq ... \geq \alpha_n \geq 0$. 
The complement of a sequence $\alpha$ is the sequence $\bar{\alpha}$
where $\bar{\alpha}_i = n - \alpha_{n+1-i} - 1$.
A sequence that corresponds to the vertex degrees of some simple graph is
called a {\it graphic} degree sequence.  A graph whose vertex degrees
match a degree sequence is termed a {\it realization} of that sequence.
To represent the degree sequence $\alpha$ of a given graph $G$,
we will use the notation $\deg(G)$ where $\deg(G) = \alpha$. 

For a realization $G=(V,E)$ of the sequence $\alpha$, 
we will use the notation $v_i$ to represent the vertex in $G$ whose
vertex degree corresponds to the $i$th value in the sequence $\alpha$, while
the neighborhood, or set of adjacent vertices, of $v_i$ is represented as
$N(v_i)$, i.e., $|N(v_i)| = \alpha_i$.
For a subset of vertices $S \subseteq V$,
the induced subgraph on this subset is represented as $G[S]$.
An edge between two vertices, $v_i$ and $v_j$, is designated as
$\edge{i}{j}$.
For a graph $G$, we denote the diameter of the graph as $\diam(G)$.

For a graphic degree sequence $\alpha$,
we define its {\it forbidden set} $\fb(\alpha)$ as the set
of all edges between labeled nodes that do not appear in any realization of
$\alpha$, while its {\it forced set} $\fc(\alpha)$ is the set of all edges
between labeled nodes that appear in every realization of $\alpha$.  
We will also define the set $\mathcal{P}(\alpha)$ to contain all the vertices
that are in some forced edge in $\alpha$, i.e.,
$\mathcal{P}(\alpha) = \lbrace v_i | \exists v_j : \edge{i}{j} \in \fc(\alpha) \rbrace$.

In order to compare degree sequences, we will use the following
partial ordering of {\it majorization}.
A degree sequence $\alpha$ majorizes (or dominates) the
integer sequence $\beta$,
denoted by $\alpha \succcurlyeq \beta$, if for all $k$ from $1$ to $n$
\begin{equation} \label{eqn:major1}
\sum_{i=1}^{k} \alpha_i \geq \sum_{i=1}^{k} \beta_i,
\end{equation}
and if the sums of the two sequences are equal.

A convenient fact that we will use is that the majorization order is
preserved by the complements of sequences, i.e., if $\alpha \lmaj \beta$
then $\bar{\alpha} \lmaj \bar{\beta}$.
\begin{theorem}
If $\alpha \lmaj \beta$, then $\bar{\alpha} \lmaj \bar{\beta}$.
\end{theorem}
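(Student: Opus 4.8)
The plan is to verify directly the two conditions in the definition of majorization for the pair $\bar{\alpha} \lmaj \bar{\beta}$: equality of the total sums, and the prefix-sum inequalities \eqref{eqn:major1}. Both will follow from a single reindexing identity together with the hypothesis $\alpha \lmaj \beta$ (here $\alpha$ and $\beta$ are understood to have the same length $n$, as required for majorization to make sense).

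First I would express a prefix sum of $\bar{\alpha}$ as a suffix sum of $\alpha$. Since $\bar{\alpha}_i = n - \alpha_{n+1-i} - 1$, for any $k \in \{1,\dots,n\}$ we have
\[
\sum_{i=1}^{k} \bar{\alpha}_i \;=\; k(n-1) - \sum_{i=1}^{k} \alpha_{n+1-i} \;=\; k(n-1) - \sum_{j=n-k+1}^{n} \alpha_j .
\]
Writing $T = \sum_{j=1}^{n} \alpha_j = \sum_{j=1}^{n} \beta_j$ (the two totals agree because $\alpha \lmaj \beta$), the suffix sum becomes $T - \sum_{j=1}^{n-k} \alpha_j$, so that
\[
\sum_{i=1}^{k} \bar{\alpha}_i \;=\; k(n-1) - T + \sum_{j=1}^{n-k} \alpha_j ,
\]
with the convention that the empty sum (the case $k=n$) is zero. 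The analogous identity holds for $\bar{\beta}$ with the same constants $k(n-1)$ and $T$.

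Subtracting the two identities, the constants cancel and I am left with
\[
\sum_{i=1}^{k} \bar{\alpha}_i - \sum_{i=1}^{k} \bar{\beta}_i \;=\; \sum_{j=1}^{n-k} \alpha_j - \sum_{j=1}^{n-k} \beta_j .
\]
For $k < n$ the right-hand side is nonnegative by the prefix-sum inequality for $\alpha \lmaj \beta$ at index $n-k$; for $k = n$ it equals zero. Taking $k = n$ also shows $\sum_i \bar{\alpha}_i = \sum_i \bar{\beta}_i$, so the equal-sum condition holds as well, and we conclude $\bar{\alpha} \lmaj \bar{\beta}$.

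There is essentially no deep obstacle here; the only points requiring care are the index bookkeeping in the reversal $i \mapsto n+1-i$ and the observation that the equal-totals half of the majorization hypothesis is exactly what makes the $k(n-1) - T$ terms cancel, so that the statement reduces cleanly to an inequality already assumed for $\alpha$ and $\beta$. One might also note in passing that $\bar{\alpha}$ is itself a properly ordered degree sequence — $\alpha$ nonincreasing forces $\alpha_{n+1-i}$ nondecreasing in $i$, hence $\bar{\alpha}_i$ nonincreasing — though this is not needed for the computation above.
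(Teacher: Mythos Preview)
Your proof is correct and follows essentially the same approach as the paper: both reduce the prefix sum $\sum_{i=1}^{k}\bar{\alpha}_i$ to $\text{(constant)} + \sum_{j=1}^{n-k}\alpha_j$ via the reindexing $i \mapsto n+1-i$ and the equal-totals condition, then invoke the majorization inequality for $\alpha \lmaj \beta$ at index $n-k$. The paper arrives at the same identity by first writing the prefix sum as the total minus a suffix sum, but the algebra and the key step are identical.
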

\begin{proof}
For $k$ where $1 \leq k \leq n$, 
\begin{equation*}
\begin{split}
	\sum^{k}_{i=1} \bar{\alpha_i} &=
	\sum^{n}_{i=1} \bar{\alpha_i} - \sum^{n}_{i=k+1} \bar{\alpha_i} 
	\ =\  \sum^{n}_{i=1} \bar{\alpha_i} - \sum^{n}_{i=k+1} \left( (n-1) - \alpha_{n+1-i} \right) \\
	&= \sum^{n}_{i=1} \bar{\beta_i} - (n-k)(n-1) + \sum^{n-k}_{i=1}
	\alpha_{i} \\ 
	&\geq \sum^{n}_{i=1} \bar{\beta_i} - (n-k)(n-1) + \sum^{n-k}_{i=1}
	\beta_{i} \ =\ \sum^{k}_{i=1} \bar{\beta_i}. \\ 
\end{split}
\end{equation*}
\end{proof}

A degree sequence which has precisely one labeled realization is
called a {\it threshold sequence} and the resulting
realization is called a {\it threshold graph} \cite{Mahadev:1995}.
In the context of our discussion about forced edges, threshold
graphs can be seen as graphs where every edge is forced, and every
non-edge is forbidden.

For convenience, we introduce a notation for showing
increments or decrements to specific indices in a sequence. 
For the degree sequence $\alpha$, the sequences
$\ominus_{i_1,...,i_k} \alpha$ and $\oplus_{i_1,...,i_k} \alpha$ are defined by
\begin{equation}
\left( \ominus_{i_1,...,i_k} \alpha \right)_i = \left\{ \begin{array}{ll}
   \alpha_i - 1 & \textrm{for}\  i \in \{i_1,...,i_k\} \\
   \alpha_i & \textrm{otherwise,} \\
   \end{array} \right.
\end{equation}
\begin{equation}
\left( \oplus_{i_1,...,i_k} \alpha \right)_i = \left\{ \begin{array}{ll}
   \alpha_i + 1 & \textrm{for}\  i \in \{i_1,...,i_k\} \\
   \alpha_i & \textrm{otherwise.} \\
   \end{array} \right.
\end{equation}
There is a straightforward but nontrivial relationship between majorization
and the decrementing and incrementing operations. 

\begin{theorem}[Fulkerson and Ryser~\cite{Fulkerson:1962}, Lemma 3.1]
\label{thm:fulkerson-ryser}
If $\alpha \lmaj \beta$ and $\omega_1 = \lbrace i_1, ..., i_k \rbrace$
and $\omega_2 = \lbrace j_1, ..., j_k \rbrace$, where $i_1 \geq j_1,
..., i_k \geq j_k$ then
$\ominus_{\omega_1} \alpha \lmaj \ominus_{\omega_2} \beta$.
and
$\oplus_{\omega_2} \alpha \lmaj \oplus_{\omega_1} \beta$
\end{theorem}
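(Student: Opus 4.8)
The plan is to obtain the $\oplus$ statement from the $\ominus$ statement by complementation, and to prove the $\ominus$ statement by first reducing to the case $k=1$ and then comparing partial sums of the (re-sorted) decremented sequences. For the $\oplus$ part I would use that for any length-$n$ sequence $\gamma$ one has $\overline{\oplus_{\omega}\gamma}=\ominus_{\omega'}\bar\gamma$ with $\omega'=\{\,n+1-j:j\in\omega\,\}$ (immediate from the definition of the complement), and that complementation commutes with rearranging into non-increasing order. Starting from $\alpha\lmaj\beta$ and the already-proved fact that complementation preserves majorization we get $\bar\alpha\lmaj\bar\beta$; the hypothesis $i_\ell\ge j_\ell$ turns into $n+1-j_\ell\ge n+1-i_\ell$, so applying the $\ominus$ statement to $\bar\alpha\lmaj\bar\beta$ with the sets $\{n+1-j_\ell\}$ on $\bar\alpha$ and $\{n+1-i_\ell\}$ on $\bar\beta$ yields $\overline{\oplus_{\omega_2}\alpha}\lmaj\overline{\oplus_{\omega_1}\beta}$, and complementing once more (using again that complementation preserves majorization and is an involution) gives $\oplus_{\omega_2}\alpha\lmaj\oplus_{\omega_1}\beta$.

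For the $\ominus$ statement I would first record the easy part: writing $A_p=\sum_{t=1}^p\alpha_t$ and $B_p=\sum_{t=1}^p\beta_t$, the coordinatewise sum $\sum_{t=1}^p(\ominus_{\omega_1}\alpha)_t$ equals $A_p-|\omega_1\cap\{1,\dots,p\}|$, and since $i_\ell\ge j_\ell$ implies $\{\ell:i_\ell\le p\}\subseteq\{\ell:j_\ell\le p\}$ we get $|\omega_1\cap\{1,\dots,p\}|\le|\omega_2\cap\{1,\dots,p\}|$; together with $A_p\ge B_p$ and equality of the full sums this already gives the majorization inequalities for the sequences \emph{in the given index order}. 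The real work is that the claim survives re-sorting into non-increasing order, and here I would reduce to $k=1$: sort $\omega_1,\omega_2$ decreasingly as $i_1>\dots>i_k$, $j_1>\dots>j_k$ (which preserves $i_\ell\ge j_\ell$), decrement only the largest pair, and note that if $i^{*}$ is the last index with $\alpha$-value $\alpha_{i_1}$ then the non-increasing rearrangement of $\ominus_{i_1}\alpha$ is $\ominus_{i^{*}}\alpha$, a sequence differing from $\alpha$ only in coordinate $i^{*}\ge i_1>i_2,\dots,i_k$. Consequently, decrementing $\ominus_{i^{*}}\alpha$ at the remaining positions $i_2,\dots,i_k$ produces the same rearrangement as $\ominus_{\omega_1}\alpha$ (using $\alpha_{i^{*}}=\alpha_{i_1}$), and likewise on the $\beta$ side, so the $k=1$ case together with the inductive hypothesis on the remaining $k-1$ pairs finishes the reduction.

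It then remains to handle $k=1$. Let $i^{*}$ (resp.\ $j^{*}$) be the last index whose $\alpha$-value equals $\alpha_i$ (resp.\ whose $\beta$-value equals $\beta_j$). The rearrangement of $\ominus_i\alpha$ is $\ominus_{i^{*}}\alpha$, so $\sum_{t=1}^p(\ominus_i\alpha)^{\downarrow}_t=A_p-[p\ge i^{*}]$ and similarly $\sum_{t=1}^p(\ominus_j\beta)^{\downarrow}_t=B_p-[p\ge j^{*}]$, and the full sums agree. Since $A_p\ge B_p$, the required inequality $A_p-[p\ge i^{*}]\ge B_p-[p\ge j^{*}]$ can only be in doubt when $i^{*}\le p<j^{*}$, where it reduces to the \emph{strict} inequality $A_p>B_p$. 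To prove that, set $v=\beta_j$; from $j\le i\le i^{*}\le p<j^{*}$ the sequence $\beta$ is constant $=v$ on $[j,j^{*}]$, so $B_p=B_{j-1}+(p-j+1)v$. If instead $A_p=B_p$, then combining this with $A_{p-1}\ge B_{p-1}$ and $A_{p+1}\ge B_{p+1}$ gives $\alpha_p\le v$ and $\alpha_{p+1}\ge v$, hence $\alpha_p=\alpha_{p+1}=v$; this forces $i^{*}<p$ (otherwise $\alpha_{i^{*}+1}=\alpha_{i^{*}}=\alpha_i$ contradicts the choice of $i^{*}$) and therefore $\alpha_i\ge v+1$, so the coordinates $\alpha_j,\dots,\alpha_{i^{*}}$ are each $\ge v+1$ and $\alpha_{i^{*}+1},\dots,\alpha_p$ are each $\ge v$, giving $\sum_{t=j}^p\alpha_t\ge(p-j+1)v+(i^{*}-j+1)$ and hence $A_p\ge B_{j-1}+(p-j+1)v+(i^{*}-j+1)>B_p$, a contradiction. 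So $A_p>B_p$ on the critical range, completing $k=1$ and the proof.

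The step I expect to be the main obstacle is precisely the interaction between the decrement operations and re-sorting into non-increasing order: this is what brings in the run-endpoints $i^{*},j^{*}$ and makes the plain coordinatewise comparison insufficient. Within that, the delicate point is the strict majorization $A_p>B_p$ on the interval $i^{*}\le p<j^{*}$ — it relies on $\beta$ being flat there while $\alpha$ is forced to be large at coordinate $i$ — and verifying that the reduction to $k=1$ correctly handles repeated values will require similar care.
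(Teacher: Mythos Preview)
The paper does not supply its own proof of this statement; it is quoted as Lemma~3.1 of Fulkerson and Ryser and used as a black box. So there is nothing in the paper to compare against, and the question reduces to whether your argument stands on its own.

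It does. Your reduction of the $\oplus$ statement to the $\ominus$ statement via complementation is correct: the identity $\overline{\oplus_{\omega}\gamma}=\ominus_{\omega'}\bar\gamma$ with $\omega'=\{n+1-j:j\in\omega\}$ is immediate from the definitions, and the paper's Theorem~1 (complementation preserves majorization) is purely algebraic and applies to arbitrary integer sequences with equal sums, so no degree-sequence bounds are needed. Your reduction to $k=1$ is also sound: sorting $\omega_1,\omega_2$ decreasingly preserves the pairwise inequalities $i_\ell\ge j_\ell$; after handling the largest pair $(i_1,j_1)$ the re-sorted sequences are $\ominus_{i^*}\alpha$ and $\ominus_{j^*}\beta$, and since $i^*\ge i_1>i_2,\dots,i_k$ (and similarly for $j^*$) the remaining indices stay disjoint from $i^*,j^*$, so the multisets match up as you claim and the induction goes through.

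The heart of the matter, the $k=1$ case, is handled correctly. The only non-trivial range is $i^*\le p<j^*$, where you must upgrade $A_p\ge B_p$ to strict inequality. Your contradiction argument is clean: assuming $A_p=B_p$ forces $\alpha_p=\alpha_{p+1}=v$ via the neighbouring partial-sum inequalities, which in turn forces $i^*<p$ (otherwise $\alpha_{i^*+1}=\alpha_{i^*}$ would contradict maximality of $i^*$), hence $\alpha_i\ge v+1$; the resulting lower bound $\sum_{t=j}^p\alpha_t\ge(p-j+1)v+(i^*-j+1)$ combined with $A_{j-1}\ge B_{j-1}$ gives $A_p\ge B_p+(i^*-j+1)>B_p$, the desired contradiction. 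All index checks ($p+1\le j^*\le n$, $i^*+1\le n$, $j-1\ge 0$) are satisfied in the critical range.

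In short: the paper offers no proof to compare with, and your argument is a complete and correct elementary proof of the Fulkerson--Ryser lemma.
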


For our purposes, the usefulness of comparing degree sequences using
majorization stems from the following result.

\begin{theorem}[Ruch and Gutman~\cite{Ruch:1979}, Theorem 1]
\label{thm:graphicality}
If the degree sequence $\alpha$ is graphic and $\alpha \lmaj \beta$,
then $\beta$ is graphic.
\end{theorem}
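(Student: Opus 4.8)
The plan is to verify the Erd\H{o}s--Gallai inequalities for $\beta$ by importing them from $\alpha$. First, since $\alpha$ is graphic, $\sum_i \alpha_i$ is even, and since $\alpha$ and $\beta$ have equal sums (part of the definition of $\alpha \lmaj \beta$) the sum of $\beta$ is also even. Now fix $k$ with $1 \le k \le n$; it suffices to establish
\[
\sum_{i=1}^{k} \beta_i \ \le\ k(k-1) + \sum_{i=k+1}^{n} \min(k, \beta_i).
\]
By the definition of majorization, $\sum_{i=1}^{k}\beta_i \le \sum_{i=1}^{k}\alpha_i$, and by the Erd\H{o}s--Gallai condition for the graphic sequence $\alpha$, $\sum_{i=1}^{k}\alpha_i \le k(k-1) + \sum_{i=k+1}^{n}\min(k,\alpha_i)$. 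So the entire statement comes down to the single inequality $\sum_{i>k}\min(k,\alpha_i) \le \sum_{i>k}\min(k,\beta_i)$.

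That inequality is the crux, and it follows from an elementary identity combined with majorization. For any nonincreasing sequence $\gamma$,
\[
\sum_{i=k+1}^{n}\min(k,\gamma_i) \ =\ \min_{k \le t \le n}\Big[(t-k)k + \sum_{i=t+1}^{n}\gamma_i\Big],
\]
because, $\gamma$ being sorted, the minimizing $t$ is precisely the largest element of $\{k\}\cup\{i : \gamma_i \ge k\}$, and any other value of $t$ only increases the bracketed quantity by a short term-by-term comparison against this minimizer. Moreover, for every fixed $t$ we have $\sum_{i>t}\alpha_i \le \sum_{i>t}\beta_i$, which is immediate from $\sum_{i\le t}\alpha_i \ge \sum_{i \le t}\beta_i$ and the equality of the total sums. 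Hence $(t-k)k + \sum_{i>t}\alpha_i \le (t-k)k + \sum_{i>t}\beta_i$ for each admissible $t$, and minimizing over $t$ gives $\sum_{i>k}\min(k,\alpha_i) \le \sum_{i>k}\min(k,\beta_i)$, as required.

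Chaining the bounds, $\sum_{i=1}^{k}\beta_i \le \sum_{i=1}^{k}\alpha_i \le k(k-1) + \sum_{i>k}\min(k,\alpha_i) \le k(k-1) + \sum_{i>k}\min(k,\beta_i)$, so $\beta$ meets every Erd\H{o}s--Gallai inequality and has even sum, and is therefore graphic. The one place I expect to need care is the displayed identity for $\sum_{i>k}\min(k,\gamma_i)$; past that, the argument is forced. A constructive alternative, closer to the edge-switching viewpoint of the Introduction, is to build a realization of $\beta$ directly: express the passage from $\alpha$ to $\beta$ as a chain of unit transfers $\ominus_p\oplus_q$ with $\gamma_p > \gamma_q$ along which the sequence stays above $\beta$ in majorization order, and note that each such transfer can be carried out on a realization by deleting an edge from $v_p$ to some vertex distinct from $v_q$ and not adjacent to $v_q$ (such an edge must exist, for otherwise $\deg(v_p)\le\deg(v_q)$, contrary to $\gamma_p>\gamma_q$) and re-attaching it to $v_q$; the delicate point there is the bookkeeping that keeps each intermediate sequence above $\beta$.
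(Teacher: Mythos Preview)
The paper does not actually prove this theorem; it is quoted from Ruch and Gutman with only a citation, so there is no ``paper's proof'' to compare against. That said, your Erd\H{o}s--Gallai argument is correct. The displayed identity
\[
\sum_{i=k+1}^{n}\min(k,\gamma_i)\;=\;\min_{k\le t\le n}\Bigl[(t-k)k+\sum_{i=t+1}^{n}\gamma_i\Bigr]
\]
holds for any nonincreasing $\gamma$: the minimizer is the last index $t^\ast\ge k$ with $\gamma_{t^\ast}\ge k$ (or $t^\ast=k$ if none), and a term-by-term comparison shows any other $t$ only raises the bracket. From there, $\sum_{i>t}\alpha_i\le\sum_{i>t}\beta_i$ for every $t$ (equal totals plus the prefix inequality), and pointwise domination of the bracketed expressions passes to the minimum, giving $\sum_{i>k}\min(k,\alpha_i)\le\sum_{i>k}\min(k,\beta_i)$. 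The chain of inequalities you wrote then yields Erd\H{o}s--Gallai for $\beta$.

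Your constructive alternative is also sound, and in fact sits closer to the edge-switching language used elsewhere in the paper. The step ``such an edge must exist, for otherwise $\deg(v_p)\le\deg(v_q)$'' is correct but deserves one more line: if every neighbour of $v_p$ other than $v_q$ were already a neighbour of $v_q$, then $N(v_p)\setminus\{v_q\}\subseteq N(v_q)\setminus\{v_p\}$, forcing $\gamma_p-1\le\gamma_q-1$ when $v_p\sim v_q$, or $N(v_p)\subseteq N(v_q)$ and hence $\gamma_p\le\gamma_q$ when $v_p\not\sim v_q$; either way a contradiction. The ``delicate bookkeeping'' you flag is the standard fact that any cover relation in the dominance order is a single Robin~Hood transfer, so one can descend from $\alpha$ to $\beta$ through graphic intermediates.
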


Finally, we will use another classic result.
\begin{theorem}[Kleitman and Wang~\cite{Kleitman:1973}, Theorem 2.1]
\label{thm:kleitman-wang}
For a degree sequence $\alpha$ and an index $i$, let $\alpha'$ be the
sequence created by subtracting 1 from the first $\alpha_i$ values in
$\alpha$ not including index $i$ and then setting $\alpha_i = 0$.
Then, the degree sequence $\alpha$ is graphic if and only if
the degree sequence $\alpha'$ is graphic.
\end{theorem}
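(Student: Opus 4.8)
\quad The plan is to prove both directions by passing between a realization of $\alpha$ and a realization of $\alpha'$ through the single vertex $v_i$. The easy direction is the ``if'' part: suppose $\alpha'$ is graphic and fix a realization $G'$. In $G'$ the vertex $v_i$ has degree $0$, and each of the $\alpha_i$ vertices whose entries were decremented has degree exactly one less than its target value in $\alpha$. Because $v_i$ is isolated in $G'$, inserting the $\alpha_i$ edges from $v_i$ to precisely those decremented vertices creates no multi-edges, raises each of their degrees by $1$, and gives $v_i$ degree $\alpha_i$; the result is a realization of $\alpha$, so $\alpha$ is graphic.

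For the ``only if'' direction, suppose $\alpha$ is graphic. Let $T$ be a ``top set'' of size $\alpha_i$ not containing $v_i$ --- that is, a set of $\alpha_i$ vertices with $v_i\notin T$ such that $\deg(v)\ge\deg(w)$ whenever $v\in T$ and $w\notin T\cup\{v_i\}$; any tie-break at the boundary yields the same $\alpha'$ after re-sorting. Among all realizations of $\alpha$, I would choose one, $G$, maximizing $|N(v_i)\cap T|$, and claim that in $G$ we in fact have $N(v_i)=T$. If not, then since $|N(v_i)|=|T|=\alpha_i$ there is a vertex $v_b\in N(v_i)\setminus T$ and a vertex $v_a\in T\setminus N(v_i)$, and $v_a,v_b,v_i$ are pairwise distinct with $\deg(v_a)\ge\deg(v_b)$. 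A short counting argument produces an exchange vertex $v_c\in N(v_a)\setminus N(v_b)$ distinct from $v_a$, $v_b$, and $v_i$: from $\deg(v_a)\ge\deg(v_b)$ we get $|N(v_a)\setminus N(v_b)|\ge|N(v_b)\setminus N(v_a)|$, the latter is nonempty because $v_i$ lies in it, and one checks that the only candidates for coincidence ($v_c=v_b$ when $v_av_b$ is an edge, etc.) can be excluded since $v_av_b$ is an edge on one side exactly when it is on the other. The $2$-switch that deletes the edges $v_iv_b$ and $v_av_c$ and inserts $v_iv_a$ and $v_bv_c$ then preserves every degree and replaces $v_b\notin T$ by $v_a\in T$ in $N(v_i)$, strictly increasing $|N(v_i)\cap T|$ and contradicting the choice of $G$. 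Hence $N(v_i)=T$ in $G$, and deleting $v_i$ (equivalently, setting its degree to $0$) leaves a graph whose degree sequence is exactly $\alpha'$, so $\alpha'$ is graphic.

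I expect the main obstacle to be the bookkeeping in the $2$-switch step: confirming that $v_c$ exists and is genuinely different from $v_i$, $v_a$, and $v_b$ in the degenerate cases where $v_av_b$ is itself an edge, and checking that the four edges involved in the switch are distinct and that no multi-edge is created. The rest --- the ``if'' direction and the observation that removing $v_i$ realizes $\alpha'$ once $N(v_i)=T$ --- is essentially immediate. An alternative to the extremal argument would be to exhibit an explicit finite chain of $2$-switches greedily pulling the neighbors of $v_i$ up into the top $\alpha_i$ vertices, with termination following from boundedness of the potential $|N(v_i)\cap T|$; the extremal phrasing simply packages this termination argument more cleanly.
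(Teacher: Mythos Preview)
The paper does not actually prove this theorem: it is quoted as a classical result of Kleitman and Wang and cited without argument, so there is no ``paper's own proof'' to compare against. Your proposal is the standard extremal/2-switch proof of the Kleitman--Wang theorem and is correct; the bookkeeping you flag as the main obstacle does go through (if $v_av_b$ is an edge then $v_a$ contributes to $N(v_b)\setminus N(v_a)$ just as $v_b$ contributes to $N(v_a)\setminus N(v_b)$, so after discarding both you still have $v_i$ on the $b$-side and hence a valid $v_c\ne v_a,v_b,v_i$ on the $a$-side, while if $v_av_b$ is a non-edge then $v_c=v_b$ is impossible outright).
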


\section{Forced and Forbidden Edges}

A simple observation about forced and forbidden edges set is that they
have a dual relationship through their complement degree sequences.

\begin{observation} \label{obs:dual}
\[ \fc(\alpha) = \fb(\bar{\alpha}). \]
\end{observation}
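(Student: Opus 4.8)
The plan is to exhibit an explicit bijection between the labeled realizations of $\alpha$ and those of $\bar{\alpha}$, given by graph complementation composed with the index reversal $i \mapsto n+1-i$, and then to observe that an edge lying in \emph{every} realization on one side is literally the statement that the corresponding non-edge lies in \emph{no} realization on the other side.

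First I would pin down the vertex correspondence. If $G$ is a realization of $\alpha$ with $\deg_G(v_i) = \alpha_i$, then the complement graph $\bar{G}$ on the same vertex set has $\deg_{\bar{G}}(v_i) = n-1-\alpha_i$. Since $\bar{\alpha}_{n+1-i} = n-1-\alpha_i$, renaming each vertex $v_i$ of $\bar{G}$ to $v_{n+1-i}$ produces a properly labeled realization of $\bar{\alpha}$; call it $\Phi(G)$. The map $\Phi$ is a bijection from the realizations of $\alpha$ to the realizations of $\bar{\alpha}$: complementation is an involution on graphs, the reversal $i \mapsto n+1-i$ is an involution on indices, and $\bar{\bar{\alpha}} = \alpha$, so applying the same construction to $\bar{\alpha}$ recovers $\Phi^{-1}$.

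Next I would track a single pair through $\Phi$. For $i \neq j$, the pair $\edge{i}{j}$ is an edge of $G$ if and only if it is a non-edge of $\bar{G}$, if and only if $\edge{n+1-i}{n+1-j}$ is a non-edge of $\Phi(G)$. Because $\Phi$ is onto, quantifying "over all realizations of $\alpha$" coincides with quantifying "over all realizations of $\bar{\alpha}$", so $\edge{i}{j}$ lies in every realization of $\alpha$ exactly when $\edge{n+1-i}{n+1-j}$ lies in no realization of $\bar{\alpha}$. That is precisely $\fc(\alpha) = \fb(\bar{\alpha})$, with the labels of $\bar{\alpha}$'s realizations read through the reversal already built into the definition $\bar{\alpha}_i = n - \alpha_{n+1-i} - 1$.

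The only genuinely delicate point — and the step I would be most careful to spell out — is this index reversal: one must verify that after complementing and reversing indices the resulting degrees are in non-increasing order and that this reversal is exactly the relabeling implicit in the definition of $\bar{\alpha}$, so that "the edge $\edge{i}{j}$ of $\alpha$" and "the edge $\edge{n+1-i}{n+1-j}$ of $\bar{\alpha}$" are being compared consistently. Everything else reduces to the elementary fact that complementation is a degree‑complementing involution on labeled graphs.
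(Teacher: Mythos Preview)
Your argument is correct and complete. The paper offers no proof of this observation at all---it is stated as self-evident---so there is no approach to compare against; your treatment is strictly more detailed than anything in the paper.

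In one respect you are actually more careful than the paper. You correctly flag that the identification $\fc(\alpha) = \fb(\bar{\alpha})$ only holds once the labels of $\bar{\alpha}$ are read through the reversal $i \mapsto n+1-i$ built into the definition $\bar{\alpha}_i = n-1-\alpha_{n+1-i}$. Taken as a literal equality of subsets of $\{\edge{i}{j} : 1 \le i < j \le n\}$ the statement fails: for $\alpha = (2,1,1)$ one has $\fc(\alpha) = \{\edge{1}{2}, \edge{1}{3}\}$ while $\bar{\alpha} = (1,1,0)$ gives $\fb(\bar{\alpha}) = \{\edge{1}{3}, \edge{2}{3}\}$. The reversal is what makes these coincide, and you are right to insist on spelling it out.
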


A method for determining whether an edge is either forced or forbidden for a
degree sequence is given by the next theorem.

\begin{theorem}[Blitzstein and Diaconis \cite{Blitzstein:2010}, Proposition 6.2]
\label{thm:forced_forbidden_edges} 
Let $\alpha$ be a graphic degree sequence and $i,j \in \{1,...,n\}$
with $i \neq j$. The edge $\edge{i}{j} \in \fc(\alpha)$ if and only if
$\oplus_{i,j} \alpha$ is not graphic, while the edge $\edge{i}{j} \in
\fb(\alpha)$ if and only if $\ominus_{i,j} \alpha$. 
\end{theorem}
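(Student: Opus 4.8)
The two ``easy'' implications are immediate. If $\oplus_{i,j}\alpha$ is not graphic, then no realization of $\alpha$ can omit $\edge{i}{j}$, since adjoining $\edge{i}{j}$ to such a realization would be a realization of $\oplus_{i,j}\alpha$; hence $\edge{i}{j}\in\fc(\alpha)$. Symmetrically, if $\ominus_{i,j}\alpha$ is not graphic, then no realization of $\alpha$ can contain $\edge{i}{j}$ (deleting it would realize $\ominus_{i,j}\alpha$), so $\edge{i}{j}\in\fb(\alpha)$. For the converses, note that deleting, respectively adjoining, the edge $\edge{i}{j}$ gives a bijection between realizations of $\alpha$ containing $\edge{i}{j}$ and realizations of $\ominus_{i,j}\alpha$ avoiding it, and between realizations of $\alpha$ avoiding $\edge{i}{j}$ and realizations of $\oplus_{i,j}\alpha$ containing it. Thus $\edge{i}{j}\notin\fb(\alpha)$ iff $\alpha$ has a realization containing $\edge{i}{j}$, and $\edge{i}{j}\notin\fc(\alpha)$ iff $\oplus_{i,j}\alpha$ does. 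Both converses then follow from a single claim: \emph{if $\beta$ and $\ominus_{i,j}\beta$ are both graphic, then $\beta$ has a realization containing $\edge{i}{j}$}, applied with $\beta=\alpha$ for the forbidden case and with $\beta=\oplus_{i,j}\alpha$ (so that $\ominus_{i,j}\beta=\alpha$) for the forced case. (Alternatively one could obtain the forced statement from the forbidden one via Observation~\ref{obs:dual}, the identity $\overline{\oplus_{i,j}\alpha}=\ominus_{n+1-i,\,n+1-j}\bar{\alpha}$, and the fact that complementation preserves non-graphicality.)

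To prove the claim, fix a realization $G$ of $\beta$ and a realization $H$ of $\ominus_{i,j}\beta$. If $\edge{i}{j}\in E(G)$ we are done, and if $\edge{i}{j}\notin E(H)$ then $H+\edge{i}{j}$ realizes $\beta$ with the edge present; so assume $\edge{i}{j}\in E(H)\setminus E(G)$, choosing $H$ so that $|E(G)\,\triangle\,E(H)|$ is minimum. Colour an edge of $E(G)\,\triangle\,E(H)$ red if it lies in $E(G)$ and blue if it lies in $E(H)$; then $\edge{i}{j}$ is blue, and since the degrees in $G$ and $H$ agree outside $\{v_i,v_j\}$ and differ by one at each of $v_i,v_j$, every vertex is incident to equally many red and blue edges, except $v_i$ and $v_j$, which each meet one more red than blue. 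Flipping any alternating (red--blue) cycle inside $H$ (delete its blue edges from $H$, add its red edges) again realizes $\ominus_{i,j}\beta$ but has strictly smaller symmetric difference with $G$, so by minimality the red--blue graph contains no alternating cycle. A structural analysis of a red--blue graph with this degree pattern and no alternating cycle shows it must be a single simple red--blue path from $v_i$ to $v_j$ whose two end edges are red; such a path cannot contain the blue edge $\edge{i}{j}$ (the only edge of a simple path joining its two endpoints would be the path itself, hence a red edge). This contradiction gives $\edge{i}{j}\notin E(H)$, and $H+\edge{i}{j}$ realizes $\beta$ and contains $\edge{i}{j}$.

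I expect the main obstacle to be exactly that last ``structural analysis'': ruling out the degenerate possibility that $\edge{i}{j}$ lies in the red--blue graph but on no alternating cycle (for instance attached to an odd, red-heavy cycle). Put in purely $2$-switch terms, the crux is the ``blocked'' configuration in which no single $2$-switch of $G$ creates $\edge{i}{j}$ — which forces the neighbourhoods of $v_i$ and $v_j$ in $G$ to be completely joined to each other — and one must use the graphicality of $\ominus_{i,j}\beta$ to show this obstruction cannot be present in every realization. This is precisely the content of Blitzstein and Diaconis's Proposition~6.2, so in the paper one may simply cite it; the argument above sketches how a self-contained proof would go.
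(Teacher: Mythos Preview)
Your reduction of both hard implications to the single claim ``if $\beta$ and $\ominus_{i,j}\beta$ are both graphic then $\beta$ has a realization containing $\edge{i}{j}$'' is correct and is exactly the claim the paper uses. The paper, however, does not attempt your alternating-trail argument at all: it simply observes that this claim is (a special case of) Kundu's theorem~\cite{Kundu:1973}, which guarantees a realization of $\oplus_{i,j}\alpha$ containing $\edge{i}{j}$ whenever both $\oplus_{i,j}\alpha$ and $\alpha$ are graphic; removing that edge then gives a realization of $\alpha$ without it. So the paper's entire proof is a two-line citation, with the forbidden case obtained from the forced case by complementation, just as in your parenthetical remark.

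You are right to flag the ``structural analysis'' as the obstacle, but the situation is worse than merely hard: the assertion that a red/blue graph with excess $+1$ at $v_i,v_j$ and no alternating cycle (even: no alternating closed trail) must be a single simple alternating path is \emph{false}. Take the blue edge $\edge{i}{j}$ together with two triangles $v_i\,a\,b$ and $v_j\,c\,d$, coloured so that $v_ia,v_ib,v_jc,v_jd$ are red and $ab,cd$ are blue. This is exactly $E(G)\triangle E(H)$ for $G=\{v_ia,v_ib,v_jc,v_jd\}$ realizing $\beta=(2,2,1,1,1,1)$ and $H=\{v_iv_j,ab,cd\}$ realizing $\ominus_{i,j}\beta=(1,1,1,1,1,1)$, and $|E(G)\triangle E(H)|=7$ is already minimal over all such pairs. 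Since $\edge{i}{j}$ is a bridge, the only closed trails are the two odd triangles, neither alternating; yet the graph is not a path and does contain the blue edge $\edge{i}{j}$. (Of course this $\beta$ \emph{does} admit a realization with $\edge{i}{j}$, so the example refutes only your intermediate lemma, not the theorem---but it is the lemma your argument invokes.) Completing a self-contained symmetric-difference proof therefore requires a genuinely different endgame, which is why the paper defers to Kundu.
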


\begin{proof}
The theorem can be viewed as a consequence of Kundu's Theorem~\cite{Kundu:1973}.
If $\oplus_{i,j} \alpha$ is graphic, then Kundu's Theorem guarantees that
there exists a realization $G$ of $\oplus_{i,j} \alpha$ containing the edge
$\edge{i}{j}$.  Removing the edge $\edge{i}{j}$ from $G$
shows that it is not forced in $\alpha$. The reverse direction is trivial.
The result for forbidden edges follows from the forced edge result on
the complement sequences.
\end{proof}

A useful fact for examining the structure of graphs containing forced
edges is that a force edge remains forced across
any induced subgraphs containing that edge.

\begin{theorem}
Let $\alpha$ be a graphic degree sequence and $i,j \in \{1,...,n\}$
with $i \neq j$. If the edge $\edge{i}{j} \in \fc(\alpha)$, then 
for any realization $G$ of $\alpha$ and for every vertex set $S$,
where
$\lbrace i,j \rbrace \subseteq S \subseteq V$,   $\edge{i}{j} \in
\fc(\deg(G[S]))$.
\end{theorem}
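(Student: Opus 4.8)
The plan is to argue by contradiction, exploiting the simple structural fact that any realization of the degree sequence of an induced subgraph can be spliced back into $G$. First I would note that $\deg(G[S])$ is graphic (it is realized by $G[S]$ itself), so $\fc(\deg(G[S]))$ is well defined. Now suppose $\edge{i}{j} \notin \fc(\deg(G[S]))$. Then there is a realization $H$ of $\deg(G[S])$ which, after relabelling, we may take to have vertex set $S$ with $\deg_H(v) = \deg_{G[S]}(v)$ for every $v \in S$, and with $\edge{i}{j} \notin E(H)$. Form the graph $G' = (V, E')$ where $E' = \left( E(G) \setminus E(G[S]) \right) \cup E(H)$; that is, delete from $G$ all edges lying entirely inside $S$ and replace them by the edges of $H$.

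The next step is to check that $G'$ is again a realization of $\alpha$. Every vertex outside $S$ retains all of its incident edges, so its degree is unchanged. For $v \in S$, the edges from $v$ to $V \setminus S$ are untouched, while its edges inside $S$ are replaced: $v$ loses its $\deg_{G[S]}(v)$ edges inside $S$ and gains its $\deg_H(v)$ edges, and these two counts are equal by the choice of $H$. Hence $\deg_{G'}(v) = \deg_G(v)$ for all $v$, so $\deg(G') = \alpha$. But no edge of $G'$ lies inside $S$ except those coming from $H$, and $\edge{i}{j} \notin E(H)$, so $\edge{i}{j} \notin E(G')$, contradicting $\edge{i}{j} \in \fc(\alpha)$. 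Equivalently, one could route this through Theorem~\ref{thm:forced_forbidden_edges}: graphicality of the corresponding incremented sequence of $\deg(G[S])$ would, via Kundu's Theorem plus the same splicing, force $\oplus_{i,j}\alpha$ to be graphic; but the direct version above avoids that detour.

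The one point needing care, and the thing I expect to be mildly delicate to state cleanly rather than genuinely hard, is the bookkeeping of labels: the indices $i,j$ are positions in $\alpha$ and hence designate specific vertices of $G$, so "$\edge{i}{j}$ is forced for $\deg(G[S])$" must be read with the labelling of $S$ inherited from $G$ (equivalently, with $H$ realized on the vertex set $S$ in a degree-consistent way). Once that correspondence is pinned down, the negation of "forced" really does supply a realization on $S$ that omits $\edge{i}{j}$, and the splicing step is then purely mechanical degree-counting.
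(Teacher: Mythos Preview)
Your argument is correct and is essentially identical to the paper's: both proceed by contradiction, take a realization of $\deg(G[S])$ omitting $\edge{i}{j}$, and splice it back into $G$ by replacing the edges inside $S$ while keeping the cross-edges, producing a realization of $\alpha$ without $\edge{i}{j}$. Your write-up is more explicit about the degree-counting and the labelling convention, but the construction is the same.
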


\begin{proof}
Assume that the edge $\edge{i}{j}$ is forced in
$G = (V_G,E_G)$
but not in the induced subgraph $G[S]$. Take a realization $R = (V_R, E_R)$
of the degree sequence of $G[S]$ that does not contain the edge
$\edge{i}{j}$, and create a new graph $ C= (V_G, E_C)$ where
for any two vertices $v_p,v_q \in V_G$, if $v_p,v_q \in S$ then
$\edge{p}{q} \in E_C$
if and only if $\edge{p}{q} \in E_R$;  else, $\edge{p}{q} \in E_C$
if and only if $\edge{p}{q} \in E_G$.  This graph
$C$ defines a realization of $\alpha$ that does not contain the
edge $\edge{i}{j}$ causing a contradiction. 
\end{proof}

There is a simple extension of Theorem \ref{thm:forced_forbidden_edges} for
sets of forbidden edges through the complement sequence.  This includes a test
for determining if an edge $\edge{i}{j}$ is forbidden by
testing whether or not  $\ominus_{i,j} \alpha$ is graphic.
We extend Theorem \ref{thm:forced_forbidden_edges} by using Theorem
\ref{thm:graphicality} to show an edge-inclusion result for forced and forbidden sets of
a degree sequence.
\begin{theorem}
\label{thm:sets_of_forced_edges}
For the graphic degree sequence $\alpha$, if $\edge{i}{j} \in
\fc(\alpha)$,
then for all indices $p,q$ where
$1 \leq p \leq i$ and $1 \leq q \leq j$ and $p \neq q$, $\edge{p}{q} \in \fc(\alpha)$.
\end{theorem}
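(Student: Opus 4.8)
The plan is to translate both the hypothesis and the conclusion into statements about graphicality using Theorem \ref{thm:forced_forbidden_edges}, and then transport non-graphicality along a majorization relation produced by Theorem \ref{thm:fulkerson-ryser}. By Theorem \ref{thm:forced_forbidden_edges}, saying $\edge{i}{j}\in\fc(\alpha)$ is the same as saying $\oplus_{i,j}\alpha$ is not graphic, and proving $\edge{p}{q}\in\fc(\alpha)$ amounts to showing $\oplus_{p,q}\alpha$ is not graphic. So the whole task reduces to the following: assuming $p\le i$, $q\le j$ and $p\ne q$, deduce the non-graphicality of $\oplus_{p,q}\alpha$ from the non-graphicality of $\oplus_{i,j}\alpha$.

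First I would establish the majorization $\oplus_{p,q}\alpha\lmaj\oplus_{i,j}\alpha$. Since $\alpha\lmaj\alpha$ holds trivially, I apply Theorem \ref{thm:fulkerson-ryser} with $\beta=\alpha$, $\omega_1=\{i,j\}$ and $\omega_2=\{p,q\}$, pairing $i$ with $p$ and $j$ with $q$: the hypotheses $p\le i$ and $q\le j$ are exactly the index-domination conditions the lemma requires, and its incrementing conclusion gives
\[
\oplus_{p,q}\alpha\ \lmaj\ \oplus_{i,j}\alpha .
\]
(Alternatively, one can reach the same relation by two single-index applications of Theorem \ref{thm:fulkerson-ryser}, first lowering index $j$ down to $q$ and then lowering index $i$ down to $p$, composing the two by transitivity of $\lmaj$.)

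Now suppose, for contradiction, that $\oplus_{p,q}\alpha$ is graphic. Then, since $\oplus_{p,q}\alpha\lmaj\oplus_{i,j}\alpha$, Theorem \ref{thm:graphicality} forces $\oplus_{i,j}\alpha$ to be graphic, contradicting $\edge{i}{j}\in\fc(\alpha)$ via Theorem \ref{thm:forced_forbidden_edges}. Hence $\oplus_{p,q}\alpha$ is not graphic, and a final appeal to Theorem \ref{thm:forced_forbidden_edges} yields $\edge{p}{q}\in\fc(\alpha)$, as desired.

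I do not expect a substantive obstacle; the only point demanding a little care is the bookkeeping in the invocation of Theorem \ref{thm:fulkerson-ryser}. One should confirm that $\{i,j\}$ and $\{p,q\}$ are genuine two-element sets (immediate from $i\ne j$ and $p\ne q$) and that the pairing $i\leftrightarrow p$, $j\leftrightarrow q$ remains a valid index-wise domination even when the two sets overlap — for instance when $p=i$, or $q=j$, or $p=j$ in the case $i>j$ — in which case the relevant inequality merely degenerates to an equality; and when $\{p,q\}=\{i,j\}$ there is nothing to prove. Everything else is a direct chaining of Theorems \ref{thm:forced_forbidden_edges}, \ref{thm:fulkerson-ryser} and \ref{thm:graphicality}.
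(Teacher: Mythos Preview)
Your proposal is correct and follows essentially the same approach as the paper's proof: both argue by contradiction, using Theorem~\ref{thm:forced_forbidden_edges} to recast forcedness as non-graphicality, Theorem~\ref{thm:fulkerson-ryser} to obtain $\oplus_{p,q}\alpha \lmaj \oplus_{i,j}\alpha$, and Theorem~\ref{thm:graphicality} to transport graphicality and derive the contradiction. Your version is simply more explicit about the bookkeeping in invoking Theorem~\ref{thm:fulkerson-ryser}.
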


\begin{proof}
Suppose that $\edge{p}{q}$ is not forced, then Theorem
\ref{thm:forced_forbidden_edges} implies that $\oplus_{p,q} \alpha$ is
graphic.  From Theorem \ref{thm:fulkerson-ryser}, it follows that
$\oplus_{p,q} \alpha \lmaj \oplus_{i,j} \alpha$,
and Theorem \ref{thm:graphicality} implies that
$\oplus_{i,j} \alpha$ is graphic, contradicting the assumption
that $\edge{i}{j}$ is forced.
\end{proof}

An immediate consequence of this proposition is that if there exist any forced
edges for a degree sequence, then the edge $\edge{1}{2}$ must be one of
them.
This gives linear-time methods to determine if a sequence has any forced
or forbidden edges by testing whether $\oplus_{1,2} \alpha$ or
$\ominus_{n-1,n} \alpha$ are graphic respectively. Extending this
observation
establishes conditions for degree sequences that cannot have forced
edges. 
\begin{theorem}
\label{thm:bounds}
For the graphic sequence $\alpha$ where $\alpha_n > 0$, if
\begin{equation}
n \geq \min \lbrace \frac{(\alpha_1 + \alpha_n + 2)^2}{4 \alpha_n},
\frac{(\alpha_1 + \alpha_n )^2}{2 \alpha_n} \rbrace,
\end{equation}
then $\fc(\alpha) = \emptyset$.
\end{theorem}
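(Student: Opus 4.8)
The plan is to reduce the statement to a single graphicality test and then run the Erd\H{o}s--Gallai inequalities. By the remark following Theorem~\ref{thm:sets_of_forced_edges}, if $\fc(\alpha)\neq\emptyset$ then $\edge{1}{2}\in\fc(\alpha)$, and by Theorem~\ref{thm:forced_forbidden_edges} this happens exactly when $\beta:=\oplus_{1,2}\alpha$ is not graphic; hence $\fc(\alpha)=\emptyset$ if and only if $\beta$ is graphic, and it is enough to show that the hypothesized lower bound on $n$ forces $\beta$ to be graphic. The sequence $\beta$ is again non-increasing (incrementing the two largest entries of a sorted sequence keeps it sorted), has even sum $2m+2$, maximum entry $\beta_1=\alpha_1+1$, and, since $\alpha_n>0$, minimum entry $\beta_n=\alpha_n\ge1$.

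I would then check the Erd\H{o}s--Gallai conditions for $\beta$. For an index $k<\alpha_n$ every entry of $\beta$ exceeds $k$, so the $k$-th condition collapses to $\beta_1\le n-1$, i.e.\ $\alpha_1\le n-2$; I would verify up front that the hypothesis forces this, because if $\alpha_1=n-1$ then both quantities in the displayed minimum exceed $n$ (for $n\ge3$: the quadratics $(n+1+\alpha_n)^2-4n\alpha_n$ and $(n-1+\alpha_n)^2-2n\alpha_n$ are then positive in $\alpha_n$). For $k\ge\alpha_n$ I would bound $\sum_{i\le k}\beta_i\le k\beta_1$ and $\sum_{i>k}\min(\beta_i,k)\ge(n-k)\alpha_n$, so that the $k$-th condition holds whenever
\[
n\ \ge\ k+\frac{k(\alpha_1+2-k)}{\alpha_n},
\]
and the right-hand side is a downward parabola in $k$ whose maximum, attained at $k=\tfrac12(\alpha_1+\alpha_n+2)\in[\alpha_n,\alpha_1+1]$, equals $(\alpha_1+\alpha_n+2)^2/(4\alpha_n)$. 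Thus if $n$ is at least this value, every Erd\H{o}s--Gallai condition holds and $\beta$ is graphic; this accounts for the first term of the minimum. The second term, $(\alpha_1+\alpha_n)^2/(2\alpha_n)$, is the smaller of the two only when $\alpha_1+\alpha_n\le4$, and in that narrow range I would recover it by a sharper accounting of the same conditions (retaining the exact tail $\sum_{i>k}\beta_i=2m+2-\sum_{i\le k}\beta_i$, using $2m\ge\alpha_1+\alpha_2+(n-2)\alpha_n$ and the parity of $2m$, and re-optimizing over $k$) or by direct inspection of the finitely many degree ranges involved.

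The routine parts are the parabola optimization and the substitutions $\beta_1=\alpha_1+1$, $\beta_n=\alpha_n$; the delicate point, and the one I expect to be the main obstacle, is keeping the Erd\H{o}s--Gallai estimates tight enough that the exponents come out as exactly $(\alpha_1+\alpha_n+2)^2/(4\alpha_n)$ and $(\alpha_1+\alpha_n)^2/(2\alpha_n)$, since the obvious simplifications --- dropping the $\alpha_n$ factor in the tail sum, restricting to the single index $k=\alpha_1$, or not optimizing $k$ over the full interval $[\alpha_n,\alpha_1+1]$ --- each produce a strictly weaker bound. A convenient alternative for the first term is to invoke the Zverovich--Zverovich sharpening of the Erd\H{o}s--Gallai theorem, namely that a positive even-sum sequence on $n$ vertices with largest entry $\Delta$ and smallest entry $\delta$ is graphic whenever $n\ge(\Delta+\delta+1)^2/(4\delta)$, applied to $\beta$ with $\Delta=\alpha_1+1$ and $\delta=\alpha_n$.
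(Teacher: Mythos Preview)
Your treatment of the first term coincides with the paper's: the paper simply cites the Zverovich--Zverovich graphicality criterion and applies it to $\oplus_{1,2}\alpha$, which is exactly the alternative you spell out at the end, and your direct Erd\H{o}s--Gallai optimization is in effect a re-derivation of that criterion.

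For the second term the two arguments diverge. The paper does not return to Erd\H{o}s--Gallai at all; instead it invokes a degree-sequence packing theorem of Busch, Ferrara, Hartke, Jacobson, Kaul and West (Theorem~2.2 of \cite{Busch:2012}), applied to the pair $\alpha$ and $\seq{1,1,0,\dots,0}$. If these two sequences pack, then some realization of $\alpha$ avoids the edge $\edge{1}{2}$, so that edge is not forced; the hypothesis of the packing theorem rearranges to yield the bound $(\alpha_1+\alpha_n)^2/(2\alpha_n)$. Your route---observing that the second term is the strictly smaller one only when $\alpha_1+\alpha_n\le4$ and then disposing of those finitely many parameter ranges by a refined Erd\H{o}s--Gallai count or by direct inspection---is more elementary and keeps everything inside a single framework, but you have not actually carried it out; the ``sharper accounting'' is only promised, not performed. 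The paper's packing argument buys a uniform one-line derivation at the cost of an external citation; your approach would buy self-containment at the cost of a small case analysis you still owe.
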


\begin{proof}
The first term in this bound comes by substituting $\alpha_1+1$ for the
maximum degree into the graphic bound
given by Zverovich and Zverovich (Theorem 6, \cite{Zverovich:1992});
it follows that if the above  bound holds then $\oplus_{1,2} \alpha$ is graphic.
The second term uses a degree sequence packing result. Using the
observation that two degree sequences cannot pack where both have the same
forced edge $\edge{1}{2}$, we apply Theorem 2.2 of Busch et
al. \cite{Busch:2012} to pack a sequence $\alpha$ with the
sequence $\seq{1,1,0,...,0}$ and after some algebraic manipulation establish
the second bound.
\end{proof}

Theorem \ref{thm:sets_of_forced_edges} is also enough to establish the
structure of the sets of forced and forbidden edges. While the induced
subsets $S \subseteq \mathcal{P}(\alpha)$ do not necessarily need to be
threshold graphs, the sets of forced edges for a degree sequence always
do form a threshold graph.

\begin{theorem}
\label{thm:threshold_structure}
For a graphic degree sequence $\alpha$, the graph $G = (\mathcal{P}(\alpha),
\fc(\alpha))$ is a threshold graph. 
\end{theorem}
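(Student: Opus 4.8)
The plan is to show that the graph $G=(\mathcal{P}(\alpha),\fc(\alpha))$ has no \emph{induced} $P_4$, $C_4$, or $2K_2$ on four vertices, since this is the standard forbidden-subgraph characterization of threshold graphs (see \cite{Mahadev:1995}). The key structural input is Theorem~\ref{thm:sets_of_forced_edges}: the forced edge set is ``downward closed'' in the index order, meaning that if $\edge{i}{j}\in\fc(\alpha)$ then every $\edge{p}{q}$ with $p\le i$, $q\le j$, $p\ne q$ is also forced. In particular, if two vertices $v_i,v_j$ lie in $\mathcal{P}(\alpha)$, then each is incident to \emph{some} forced edge, and by pushing the other endpoint down in the index order we can control exactly which pairs among low-index vertices must be forced.

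First I would record the precise consequence of Theorem~\ref{thm:sets_of_forced_edges} that I actually need: for indices $i<j$, if $v_j\in\mathcal{P}(\alpha)$ (so $v_j$ is in some forced edge $\edge{j}{\ell}$), then in fact $\edge{i}{j}$ is forced whenever $i\ne j$ and $i$ is at most $\max\{$the partner indices available$\}$ — more cleanly, I would prove the claim that \emph{if $v_i,v_j\in\mathcal{P}(\alpha)$ with $i<j$ and $v_j$ is incident to a forced edge $\edge{j}{k}$ with $k\ne i$, then $\edge{i}{j}\in\fc(\alpha)$}; this follows because $\edge{j}{k}$ forced with, say $i\le k$ (or $i\le j$) lets us apply the downward-closure. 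The cleanest statement: letting $d_i$ denote the largest index such that $\edge{i}{d_i}$ is forced (the ``forced-degree reach'' of $v_i$), Theorem~\ref{thm:sets_of_forced_edges} shows that $v_i$ is forced-adjacent to exactly the vertices $\{v_q : q\le d_i,\ q\ne i\}$ together with possibly more — actually exactly those $q$ with $q\le d_i$ or $i\le d_q$. I would verify that the neighborhoods in $G$ are nested: for $i<j$, $N_G(v_j)\setminus\{v_i\}\subseteq N_G(v_i)\setminus\{v_j\}$. Nested neighborhoods is itself a characterization of threshold graphs, so establishing this neighborhood-nesting directly is the most economical route.

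So the concrete steps are: (1) state and prove neighborhood nesting — for any $i<j$ with $v_i,v_j\in\mathcal{P}(\alpha)$, show $N_G(v_j)\setminus\{v_i\} \subseteq N_G(v_i)\setminus\{v_j\}$, using that any forced edge $\edge{j}{q}$ with $q\ne i$ forces $\edge{i}{q}$ (when $q\le j$, which holds unless $q>j$; but if $q>j$ then $\edge{j}{q}$ forced with $j<q$ is symmetric and still forces $\edge{i}{q}$ since $i<j<q$ gives $i\le q$ — need to be careful, but Theorem~\ref{thm:sets_of_forced_edges} applies with the roles chosen so that $p=i\le j$ and $q'=q$, noting $i\ne q$); (2) recall that a graph is a threshold graph if and only if its vertex set can be linearly ordered so that neighborhoods are nested in this sense, equivalently it is $\{P_4,C_4,2K_2\}$-free; (3) conclude.

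The main obstacle I anticipate is the bookkeeping in step (1): Theorem~\ref{thm:sets_of_forced_edges} is stated with the hypothesis $p\le i$ \emph{and} $q\le j$ simultaneously, so to transport a forced edge $\edge{j}{q}$ to a forced edge $\edge{i}{q}$ I need $i\le j$ (true) and $q\le q$ (true), which is fine — but I must handle the case where the ``large'' endpoint of the witnessing forced edge of $v_j$ exceeds $j$, and make sure I'm always applying the theorem with a legitimate pair of index-dominations and with $p\ne q$. This amounts to a short case check on which of the two endpoints of each forced edge is smaller, and ensuring $v_i$ is never accidentally required to be adjacent to itself. Once the nesting is pinned down carefully, the rest is immediate from the classical characterization of threshold graphs.
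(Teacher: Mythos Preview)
Your approach is correct and rests on the same key input as the paper---Theorem~\ref{thm:sets_of_forced_edges}---but you reach the threshold conclusion through a different characterization. The paper argues directly that no four vertices of $G$ induce $2K_2$, $P_4$, or $C_4$: given two vertex-disjoint forced edges $\edge{p}{q}$ and $\edge{r}{s}$ with $p<q$, $r<s$, $p<r$, it applies Theorem~\ref{thm:sets_of_forced_edges} to force $\edge{p}{r}$ (ruling out $2K_2$), and then a short case split on the relative order of $q,r,s$ forces one further edge among the four vertices (ruling out $P_4$ and $C_4$). Your route---showing that for $i<j$ in $\mathcal{P}(\alpha)$ one has $N_G(v_j)\setminus\{v_i\}\subseteq N_G(v_i)\setminus\{v_j\}$---is really a one-line application of Theorem~\ref{thm:sets_of_forced_edges}: if $\edge{j}{q}$ is forced with $i<j$ and $q\ne i$, take $p=i\le j$ and keep $q$ fixed to force $\edge{i}{q}$. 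You then invoke the nested-neighborhood (total vicinal preorder) characterization of threshold graphs. Your worry about the case $q>j$ is unnecessary, since the theorem lets you decrease either coordinate of the forced edge independently, so the choice $p=i$, $q'=q$ is always admissible. The nesting argument is marginally slicker and avoids the case analysis; the paper's argument has the minor advantage of appealing only to the most elementary forbidden-subgraph characterization.
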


\begin{proof}
We want to show that the induced subgraph on any four vertices in 
$G$ cannot be either $2K_2$, $P_4$, or $C_4$ thus showing that the set of
edges form a threshold graph \cite{Chvatal:1977}.
Select any two edges in $G$ having four unique vertices,  
$\edge{p}{q}$ and $\edge{r}{s}$.  Since the vertices are unique,
then we will assume without a loss of generality that $p < q$,  $r < s$, and
$p < r$.  From Theorem \ref{thm:sets_of_forced_edges}, the edge $\edge{p}{r}$
must also be forced so $2K_2$ cannot be induced.
If $p < q < r < s$ or $p < r < q < s$, then
Theorem \ref{thm:sets_of_forced_edges} guarantees that the edge $\edge{r}{q}$
is forced thus preventing $C_4$ and $P_4$ from being induced.
Similarly, if $p < r < s < q$, then the edge $\edge{p}{s}$ 
is forced for the same result, thus confirming the theorem.
\end{proof}

Over the set of partitions for some positive integer $p$,
majorization forms a lattice \cite{Brylawski:1973}.
In these partition lattices, at the top of the graphic sequences 
are the threshold sequences in which every edge is forced. In contrast, 
Theorem \ref{thm:sets_of_forced_edges} can be extended to show that the
regular sequences, which occupy the bottom of the lattice, 
cannot have any forced or forbidden edges
(other than trivially with the complete or empty sequences).
We formalize this observation by showing a strict 
ordering of forced and forbidden sets by subset down chains in this lattice.

\begin{theorem}[Barrus \cite{Barrus:2015}, Theorem 4.1]
\label{thm:forbidden_subsets}
For the graphic sequences $\alpha$ and $\beta$, if $\alpha \lmaj \beta$ then
$\fc(\alpha) \supseteq \fc(\beta)$ and $\fb(\alpha) \supseteq \fb(\beta)$.
\end{theorem}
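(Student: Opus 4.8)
The plan is to reuse the majorization-to-graphicality argument from the proof of Theorem \ref{thm:sets_of_forced_edges}, except that here we keep the index set fixed and let the two sequences differ, rather than keeping one sequence fixed and shifting the indices. I would prove the inclusion for the forced sets first, then obtain the forbidden sets by passing to complements.

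For the forced sets, take any $\edge{i}{j} \in \fc(\beta)$. By Theorem \ref{thm:forced_forbidden_edges}, $\oplus_{i,j}\beta$ is not graphic. Applying Theorem \ref{thm:fulkerson-ryser} to $\alpha \lmaj \beta$ with $\omega_1 = \omega_2 = \{i,j\}$ --- for which the monotonicity requirement $i \geq i$, $j \geq j$ holds trivially --- yields $\oplus_{i,j}\alpha \lmaj \oplus_{i,j}\beta$. If $\oplus_{i,j}\alpha$ were graphic, then Theorem \ref{thm:graphicality} would force $\oplus_{i,j}\beta$ to be graphic as well, a contradiction; hence $\oplus_{i,j}\alpha$ is not graphic, and since $\alpha$ is graphic, Theorem \ref{thm:forced_forbidden_edges} gives $\edge{i}{j} \in \fc(\alpha)$. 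As $\edge{i}{j}$ was arbitrary, $\fc(\alpha) \supseteq \fc(\beta)$. For the forbidden sets, I would use that majorization is preserved under complementation, so that $\bar{\alpha} \lmaj \bar{\beta}$ with both complements graphic; the forced-set inclusion applied to this pair gives $\fc(\bar{\alpha}) \supseteq \fc(\bar{\beta})$, which Observation \ref{obs:dual} rewrites as $\fb(\alpha) \supseteq \fb(\beta)$. (Equivalently, one may repeat the previous argument with $\ominus$ in place of $\oplus$ throughout, using the $\ominus$ halves of Theorems \ref{thm:fulkerson-ryser} and \ref{thm:forced_forbidden_edges}.)

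I do not anticipate a genuine obstacle; the content is essentially chaining the earlier results. The one point demanding care is the direction of the implications in the Fulkerson--Ryser / Ruch--Gutman step: one must increment the \emph{same} pair of indices in $\alpha$ and in $\beta$ so that the hypothesis of Theorem \ref{thm:fulkerson-ryser} is met automatically, and one must check that the resulting relation $\oplus_{i,j}\alpha \lmaj \oplus_{i,j}\beta$ propagates non-graphicality from $\beta$ \emph{up} to $\alpha$ through the contrapositive of Theorem \ref{thm:graphicality} --- which is precisely what is wanted. A side remark is that $\oplus_{i,j}\alpha$ and $\oplus_{i,j}\beta$ need not be bounded above by $n-1$, but this causes no trouble, since the cited results are statements about integer sequences and ``not graphic'' simply subsumes that case.
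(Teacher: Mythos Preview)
Your argument is correct and essentially identical to the paper's: both apply Theorem~\ref{thm:fulkerson-ryser} with the same index set on both sides to obtain $\oplus_{i,j}\alpha \lmaj \oplus_{i,j}\beta$, then invoke Theorem~\ref{thm:graphicality} (you via the contrapositive, the paper via the direct implication starting from $\edge{p}{q}\notin\fc(\alpha)$), and finally pass to complements for the forbidden-set inclusion. The only difference is presentational --- you spell out the complement step via Observation~\ref{obs:dual} and the majorization-preservation theorem, while the paper leaves it as a one-line remark.
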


\begin{proof}
From the assumption $\alpha \lmaj \beta$,
it follows from Theorem \ref{thm:fulkerson-ryser} that
$\oplus_{p,q} \alpha \lmaj \oplus_{p, q} \beta$.
If an edge $\edge{p}{q} \not\in \fc(\alpha)$ then $\oplus_{p,q} \alpha$
is graphic
and so $\oplus_{p,q} \beta$ must also be graphic.
Thus $\edge{p}{q} \not\in \fc(\beta)$ and so $\fc(\alpha) \supseteq
\fc(\beta)$.
The implication $\fb(\alpha) \supseteq \fb(\beta)$ immediately follows
from the complement sequences.
\end{proof}

\section{Structure of Realizations}

A useful result with structural implications is that forced
(and forbidden) edges for a degree
sequence imply independent sets (or cliques) in the realizations of the
degree sequence.

\begin{theorem}
\label{thm:forced_struct}
Let $\alpha$ be a degree sequence.
\begin{enumerate}
\item If $\edge{i}{j} \in \fc(\alpha)$, then for any realization $G$ of
	$\alpha$, the set of vertices $V - (N(i) \cup N(j))$ forms an
	independent set,
\item If $n-1 > \alpha_1$ and $\alpha_n > 0$ and
	$\edge{i}{j} \in \fb(\alpha)$, then for any realization $G$ of
	$\alpha$, the set of vertices $N(i) \cup N(j)$ forms a clique.
\end{enumerate}
\end{theorem}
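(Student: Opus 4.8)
The plan is to argue by contradiction in both parts: from a realization violating the conclusion I will build, by a short sequence of edge swaps, a realization of $\oplus_{i,j}\alpha$ (part 1) or of $\ominus_{i,j}\alpha$ (part 2), which by Theorem~\ref{thm:forced_forbidden_edges} contradicts $\edge{i}{j}$ being forced, respectively forbidden.

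For part 1, fix a realization $G$. Since $\edge{i}{j}\in\fc(\alpha)$ the edge is present, so $v_i,v_j$ do not lie in $V-(N(v_i)\cup N(v_j))$. Suppose two distinct vertices $v_p,v_q$ of $V-(N(v_i)\cup N(v_j))$ are adjacent. By definition of the set $v_iv_p$ and $v_jv_q$ are non-edges, $v_pv_q$ is an edge, and $v_i,v_j,v_p,v_q$ are four distinct vertices; hence $(G-v_pv_q)+v_iv_p+v_jv_q$ is a simple graph, its degree sequence is $\oplus_{i,j}\alpha$, and so that sequence is graphic, contradicting $\edge{i}{j}\in\fc(\alpha)$.

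For part 2, fix a realization $G$; since $\edge{i}{j}\in\fb(\alpha)$ we have $v_i\not\sim v_j$, so $v_i,v_j\notin N(v_i)\cup N(v_j)$. Suppose $v_p,v_q\in N(v_i)\cup N(v_j)$ are distinct and non-adjacent. If one of them lies in $N(v_i)$ and the other in $N(v_j)$ — which in particular covers $v_p,v_q\in N(v_i)\cap N(v_j)$ — say $v_p\in N(v_i)$ and $v_q\in N(v_j)$, then $v_iv_p,v_jv_q$ are edges, $v_pv_q$ is a non-edge, the four vertices are distinct, and $(G-v_iv_p-v_jv_q)+v_pv_q$ realizes $\ominus_{i,j}\alpha$, a contradiction. (In particular this already shows that every vertex of $N(v_i)$ is adjacent to every other vertex of $N(v_j)$, so $N(v_i)\cap N(v_j)$ is a clique; note that complementing part 1 would only yield this much.) After possibly exchanging $i$ and $j$, the one remaining case is $v_p,v_q\in N(v_i)\setminus N(v_j)$ with $v_p\not\sim v_q$, and this is where the two hypotheses are used. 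By $\alpha_n>0$, $v_j$ has a neighbour $v_r$; since $v_i\notin N(v_j)$ and $v_p,v_q\notin N(v_j)$ we get $v_r\notin\{v_i,v_p,v_q\}$, and $v_r$ is adjacent to both $v_p$ and $v_q$ by the crossing remark. If such a $v_r$ can be chosen outside $N(v_i)$, then the successive swaps $G\mapsto(G-v_iv_p-v_jv_r)+v_iv_r+v_jv_p$, followed by deleting $v_iv_q$ and $v_jv_p$ and adding $v_pv_q$ in the new graph, are all legal and end at a realization of $\ominus_{i,j}\alpha$. If instead $N(v_j)\subseteq N(v_i)$ but $N(v_j)$ contains two non-adjacent vertices $v_r,v_t$, then deleting $v_iv_p,v_iv_q,v_jv_r,v_jv_t$ and adding the non-edges $v_iv_j,v_pv_q,v_rv_t$ realizes $\ominus_{i,j}\alpha$.

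In the remaining configuration $N(v_j)\subseteq N(v_i)$ and $N(v_j)$ is a clique (so $\{v_i\}\cup N(v_j)$ is a clique every member of $N(v_i)\setminus N(v_j)$ is joined to); here $\alpha_1<n-1$ forces the common neighbour $v_r$ of $v_j$ to have a non-neighbour $v_s$, the crossing remark forces $v_s\notin N(v_i)$, and the proof should conclude with one more swap that routes a unit of degree through $v_s$ and a neighbour of it to reach $\ominus_{i,j}\alpha$. I expect this last, tightest configuration to be the real obstacle: there is almost no freedom left, so the rebalancing swap has to be chosen with care, and in every case one must verify that each deleted edge is actually present, each added edge is actually absent, and no two of the vertices involved coincide.
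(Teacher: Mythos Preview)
Your Part 1 and the ``crossing'' opening of Part 2 are correct and essentially match the paper's argument (the paper produces a realization of $\alpha$ with the edge $\edge{i}{j}$ removed or added, which differs from your realization of $\oplus_{i,j}\alpha$ or $\ominus_{i,j}\alpha$ only by that single edge). Your Case~A is exactly the paper's first sub-case.

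There is, however, a genuine gap. First, your Case~B is vacuous: once $N(v_j)\subseteq N(v_i)$, any two vertices $v_r,v_t\in N(v_j)$ satisfy $v_r\in N(v_i)$ and $v_t\in N(v_j)$, so your own crossing remark already gives $v_r\sim v_t$; thus $N(v_j)$ is automatically a clique and Case~B never occurs. That means all of the remaining content lies in your Case~C, which you explicitly leave unfinished. This is not a small bookkeeping step: it is precisely where the paper does the bulk of the work, splitting further according to where a neighbour $v_u$ of $v_s$ sits. Concretely, with $v_r\in N(v_j)$ (so $v_r$ is adjacent to $v_i,v_j,v_p,v_q$) and $v_s\notin N(v_i)\cup N(v_j)$ a non-neighbour of $v_r$, one takes a neighbour $v_u$ of $v_s$ and argues:
\begin{itemize}
\item if $v_u\in\{v_p,v_q\}$, say $v_u=v_p$, delete $v_iv_q,\,v_jv_r,\,v_pv_s$ and add $v_pv_q,\,v_rv_s$ to realize $\ominus_{i,j}\alpha$;
\item if $v_u\notin\{v_p,v_q\}$ and $v_u\not\sim v_r$, delete $v_iv_r,\,v_jv_r,\,v_sv_u$ and add $v_rv_s,\,v_rv_u$.
\end{itemize}
The delicate point you anticipated is the residual possibility $v_u\sim v_r$ with $v_u\notin\{v_p,v_q\}$; the paper's Figure~2 case split is organized so as to dispatch this too, and you should work out the corresponding swap rather than leave it as an expectation. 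As written, the proposal stops exactly at the heart of Part~2 and is therefore incomplete.
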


\begin{figure}
\begin{center}

\begin{subfigure}[b]{.4\textwidth}
\begin{tikzpicture}
\begin{scope}[every node/.style={circle,draw,fill=white,minimum size=1mm,inner sep=1pt}]
\node (i) at (1,3) {$i$};
\node (j) at (4,3) {$j$};
\node (k) at (0,1) {$k$};
\node (l) at (2,1) {$l$};
\node (p) at (4,1) {$p$};
\end{scope}
\draw (i) -- (k);
\draw (i) -- (l);
\draw (l) -- (p);
\draw (j) -- (p);
\draw (k) to[out=-30,in=210] (p);
\end{tikzpicture}
\caption{Replace $\lbrace \edge{i}{l}, \edge{i}{k}, \edge{j}{p} \rbrace$ with
$\lbrace \edge{i}{j}, \edge{k}{l}, \edge{i}{p} \rbrace$. }
\label{fig:cliqueproof1}
\end{subfigure}
\hspace{5mm}
\begin{subfigure}[b]{.4\textwidth}
\begin{tikzpicture}
\begin{scope}[every node/.style={circle,draw,fill=white,minimum size=1mm,inner sep=1pt}]
\node (i) at (1,3) {$i$};
\node (j) at (4,3) {$j$};
\node (k) at (0,1) {$k$};
\node (l) at (2,1) {$l$};
\node (p) at (4,1) {$p$};
\node (q) at (0,3) {$q$};
\end{scope}
\draw (i) -- (k);
\draw (i) -- (l);
\draw (i) -- (p);
\draw (l) -- (p);
\draw (j) -- (p);
\draw (k) to[out=-30,in=210] (p);
\draw (q) -- (k);
\end{tikzpicture}
\caption{ Replace $\lbrace \edge{i}{l}, \edge{j}{p}, \edge{k}{q} \rbrace$ with
$\lbrace \edge{i}{j}, \edge{p}{q}, \edge{k}{l} \rbrace$ }
\label{fig:cliqueproof2}
\end{subfigure}

\begin{subfigure}[b]{.4\textwidth}
\begin{tikzpicture}
\begin{scope}[every node/.style={circle,draw,fill=white,minimum size=1mm,inner sep=1pt}]
\node (i) at (1,3) {$i$};
\node (j) at (4,3) {$j$};
\node (k) at (0,1) {$k$};
\node (l) at (2,1) {$l$};
\node (p) at (4,1) {$p$};
\node (q) at (5,1) {$q$};
\end{scope}
\draw (i) -- (k);
\draw (i) -- (l);
\draw (i) -- (p);
\draw (l) -- (p);
\draw (j) -- (p);
\draw (k) to[out=-30,in=210] (p);
\draw (q) -- (j);
\end{tikzpicture}
\caption{ Replace $\lbrace \edge{i}{p}, \edge{j}{q}\rbrace$ with
$\lbrace \edge{i}{j}, \edge{p}{q} \rbrace$ }
\label{fig:cliqueproof3}
\end{subfigure}
\hspace{5mm}
\begin{subfigure}[b]{.4\textwidth}
\begin{tikzpicture}
\begin{scope}[every node/.style={circle,draw,fill=white,minimum size=1mm,inner sep=1pt}]
\node (i) at (1,3) {$i$};
\node (j) at (4,3) {$j$};
\node (k) at (0,1) {$k$};
\node (l) at (2,1) {$l$};
\node (p) at (4,1) {$p$};
\node (q) at (5,1) {$q$};
\node (r) at (5,3) {$r$};
\end{scope}
\draw (i) -- (k);
\draw (i) -- (l);
\draw (i) -- (p);
\draw (l) -- (p);
\draw (j) -- (p);
\draw (k) to[out=-30,in=210] (p);
\draw (q) -- (r);
\end{tikzpicture}
\caption{ Replace $\lbrace \edge{i}{p}, \edge{j}{p}, \edge{q}{r}\rbrace$ with
$\lbrace \edge{i}{j}, \edge{p}{q}, \edge{p}{r} \rbrace$ }
\label{fig:cliqueproof4}
\end{subfigure}

\end{center}

\caption{This figure shows the cases used in the proof of Theorem
\ref{thm:forced_struct}.  For each of the above graphs, the edge
$\edge{i}{j}$ is not forbidden as shown by the  edge replacements in
each caption. }
\end{figure}
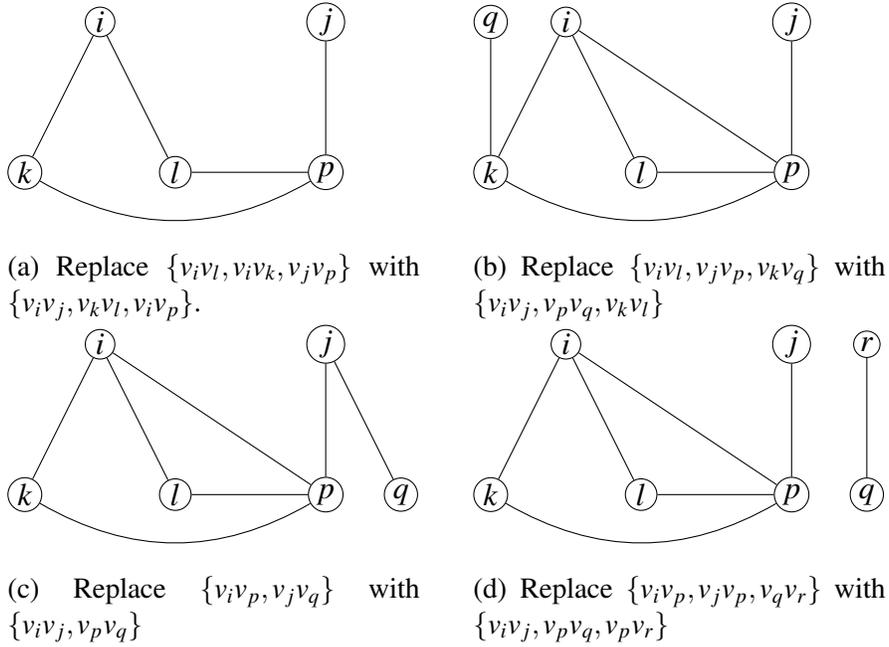

\begin{proof}
For the first statement, suppose there are vertices
$\{ v_p,v_q \} \subseteq V - (N(v_i) \cup N(v_j))$
that have an edge between them.  We can replace the
edges $\{ \edge{p}{q}, \edge{i}{j} \}$ with $\{ \edge{p}{i},
\edge{q}{j}\}$ forming a realization of $\alpha$ without the edge
$\edge{i}{j}$, causing a contraction.

For the second statement, suppose that $\edge{i}{j}$ is forbidden.
If $v_l \in N(v_i)$ and $v_p \in N(v_j)$ and
there is no edge $\edge{l}{p}$ a realization of $\alpha$, then $\lbrace
\edge{i}{l}, \edge{j}{p} \rbrace$ can be swapped out with $\edge{i}{j},
\edge{l}{p}$ contradicting the assumption that $\edge{i}{j}$ is forbidden.
Thus for $N(v_i) \cup N(v_j)$ not to be a clique requires that there exist two
vertices $v_l$ and $v_k$ where
$\{v_l,v_k \} \in N(v_i)$ but  $\{v_l,v_k \} \not\in N(v_j)$ and
$\edge{k}{l}$ is not in a realization.

Consider if the edge $\edge{i}{p}$ is also not in the realization, then we have
the induced graph shown in Figure \ref{fig:cliqueproof1}. As the caption
shows,  the edge $\edge{i}{j}$ would not be forbidden in this case.  Thus if
the vertex $v_p$ is connected to $\{i,j,k,l\}$ and since $\alpha_p < n-1$,
then there must exist a vertex $v_q$ in the realization that is not adjacent to
$v_p$.  

Again, since $\alpha_q > 0$, then $v_q$ must be adjacent to another vertex,
but we have already established that it cannot be adjacent to $v_i$.  Thus
either it is adjacent to $v_j$, $v_k$ (or $v_l$), or a completely separate
node $v_r$.  For each of those cases, Figures \ref{fig:cliqueproof2},
\ref{fig:cliqueproof3}, and \ref{fig:cliqueproof4} show that these arrangements
invalidates the assumption that $\edge{i}{j}$ is forbidden.
Thus the edge $\edge{k}{l}$ must exist, and the vertices $N(v_i) \cup N(v_j)$
form a clique. 
\end{proof}

Extending this results, we now relate the size of the sets of forced edges to
forbidden edges for a degree sequence, by showing that
forbidden edges in a degree sequence imply cliques of  forced
edges.

\begin{theorem}
\label{thm:forced_clique}
Let $\alpha$ be a graphic sequence where $\alpha_n > 0$.
If $\edge{i}{j} \in \fb(\alpha)$ then
there exists a clique of $\alpha_i$ nodes in $\fc(\alpha)$.
\end{theorem}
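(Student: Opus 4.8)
The plan is to prove that $\edge{\alpha_i-1}{\alpha_i}\in\fc(\alpha)$: by Theorem~\ref{thm:sets_of_forced_edges} this makes every edge spanned by $\{v_1,\dots,v_{\alpha_i}\}$ forced, so $\{v_1,\dots,v_{\alpha_i}\}$ is a clique of $\alpha_i$ vertices in $\fc(\alpha)$. If $\alpha_i\le 1$ the statement is vacuous, so assume $\alpha_i\ge 2$. It is convenient to first remove dominating vertices: if $\alpha_1=n-1$ then $v_1$ is universal in every realization, so every edge at $v_1$ is forced and $i,j\neq 1$; deleting $v_1$ yields a graphic sequence $\alpha^{*}$ on $n-1$ vertices with $\edge{i-1}{j-1}\in\fb(\alpha^{*})$ and $\alpha^{*}_{i-1}=\alpha_i-1$, and a clique of $\alpha_i-1$ vertices in $\fc(\alpha^{*})$ together with $v_1$ gives the clique required for $\alpha$. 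Iterating (with routine bookkeeping to discard any degree-$0$ vertices this creates, which are isolated in every realization and affect no edge) reduces everything to the case $\alpha_1<n-1$.

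Assume now $\alpha_1<n-1$ and $\alpha_i\ge 2$. Since $\ominus_{i,j}\alpha$ is not graphic (Theorem~\ref{thm:forced_forbidden_edges}), the argument proving Theorem~\ref{thm:sets_of_forced_edges}, run with $\ominus$ in place of $\oplus$, shows that $\edge{p}{q}\in\fb(\alpha)$ whenever $p\ge i$ and $q\ge j$. In particular $v_i$ is nonadjacent to each of $v_j,\dots,v_n$ in every realization, so in every realization $N(v_i)\subseteq\{v_1,\dots,v_{j-1}\}$ and hence $\alpha_i\le j-2$. By Theorem~\ref{thm:forced_struct}(2), in every realization $G$ the set $N(v_i)\cup N(v_j)$ is a clique, so $N(v_i)$ is a clique of exactly $\alpha_i$ vertices containing neither $v_i$ nor $v_j$; consequently each vertex of $N(v_i)$ has degree at least $\alpha_i$, and $\{v_i\}\cup N(v_i)$ induces $K_{\alpha_i+1}$. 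Running the Kleitman--Wang reduction of Theorem~\ref{thm:kleitman-wang} at index $i$ moreover produces a realization in which $N(v_i)$ consists of the $\alpha_i$ lowest-indexed vertices other than $v_i$, all of index at most $\alpha_i+1\le j-1$; together with the previous sentence this shows that $\{v_1,\dots,v_{\alpha_i}\}$ is a clique in at least one realization.

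The remaining step, which I expect to be the main obstacle, is to promote this to ``forced,'' i.e.\ to show $\oplus_{\alpha_i-1,\alpha_i}\alpha$ is not graphic. The plan is to argue by contradiction: if $\oplus_{\alpha_i-1,\alpha_i}\alpha$ were graphic, Kundu's theorem (as used for Theorem~\ref{thm:forced_forbidden_edges}) would give a realization $H$ of $\alpha$ with $v_{\alpha_i-1}\not\sim v_{\alpha_i}$. Both $v_{\alpha_i-1}$ and $v_{\alpha_i}$ have degree at least $\alpha_i$, hence lie in $\{v_1,\dots,v_{j-1}\}$ along with $v_i$ and the $\alpha_i$-clique $N(v_i)$ of $H$. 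If one can show that $\{v_1,\dots,v_{j-1}\}$ contains exactly $\alpha_i+1$ vertices of degree $\ge\alpha_i$, then in $H$ we must have $\{v_i\}\cup N(v_i)=\{v_1,\dots,v_{\alpha_i+1}\}$, a clique containing both $v_{\alpha_i-1}$ and $v_{\alpha_i}$ --- contradicting $v_{\alpha_i-1}\not\sim v_{\alpha_i}$. Establishing this exact count (ruling out extra vertices of degree $\ge\alpha_i$ inside $\{v_1,\dots,v_{j-1}\}$) is where the hypotheses $\edge{i}{j}\in\fb(\alpha)$ and $\alpha_n>0$ must really be used, either via the degree-sequence inequality that $\ominus_{i,j}\alpha$ violates or via an iterated edge-switch on $H$ that pulls $v_1,\dots,v_{\alpha_i}$ into $N(v_i)$ while preserving $v_{\alpha_i-1}\not\sim v_{\alpha_i}$.
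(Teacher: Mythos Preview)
Your proposal is incomplete, and you say so yourself: the passage from ``$\{v_1,\dots,v_{\alpha_i}\}$ is a clique in \emph{some} realization'' to ``these edges lie in $\fc(\alpha)$'' is the whole point, and you leave it as a plan. Unfortunately the plan you sketch does not work. Your proposed route is to show that $\{v_1,\dots,v_{j-1}\}$ contains \emph{exactly} $\alpha_i+1$ vertices of degree at least $\alpha_i$, so that $\{v_i\}\cup N(v_i)$ is pinned down as $\{v_1,\dots,v_{\alpha_i+1}\}$. But since $\alpha$ is nonincreasing, every one of $v_1,\dots,v_i$ already has degree $\ge\alpha_i$; your count would force $i\le\alpha_i+1$ and $\alpha_{\alpha_i+2}<\alpha_i$, neither of which follows from $\edge{i}{j}\in\fb(\alpha)$ and $\alpha_n>0$. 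A block of equal entries $\alpha_i=\alpha_{i+1}=\cdots$ immediately breaks the count. The fallback ``iterated edge-switch preserving $v_{\alpha_i-1}\not\sim v_{\alpha_i}$'' is not developed enough to assess, and there is no obvious reason such switches should exist.

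The paper closes the gap by a majorization argument you did not find. Let $\eta$ be the Kleitman--Wang reduction of $\alpha$ at index $i$ (decrement the first $\alpha_i$ entries other than position $i$, then delete $v_i$). First, $K_{\alpha_i}\subseteq\fc(\eta)$: any realization $R$ of $\eta$ extends to a realization of $\alpha$ by re-attaching $v_i$ to those first $\alpha_i$ vertices, and Theorem~\ref{thm:forced_struct} makes $N(v_i)$ a clique there, hence in $R$. Second, for an \emph{arbitrary} realization $G$ of $\alpha$, the sequence $\deg(G-v_i)$ majorizes $\eta$ by Theorem~\ref{thm:fulkerson-ryser} (the Kleitman--Wang choice decrements the earliest possible indices). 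Theorem~\ref{thm:forbidden_subsets} then gives $\fc(\deg(G-v_i))\supseteq\fc(\eta)\supseteq K_{\alpha_i}$, so $G-v_i$, and hence $G$, contains the clique. That is the missing idea: instead of trying to control $N(v_i)$ inside a hypothetical bad realization, drop to the reduced sequence where the clique is \emph{provably} forced, and transport it back via the majorization monotonicity of forced sets.
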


\begin{proof}
From Theorem~\ref{thm:forced_struct}, for any realization of
$\alpha$,  the vertices $N(i) \cup N(j)$ form a clique. 
Using Theorem \ref{thm:kleitman-wang}, we construct a realization $H$ of
$\alpha$  where the first $\alpha_i$ vertices are connected to $v_i$,
and so these first $\alpha_i$ vertices form a $K_{\alpha_i}$-clique. 
For the degree sequence $\eta$ created by removing the vertex $v_i$ and
its adjacent edges from $H$, this clique of the first $\alpha_i$ vertices
must exist in every realization, i.e., $K_{\alpha_i} \subseteq \fc(\eta)$.

Now take an arbitrary realization $G$ of $\alpha$. If we remove $v_i$ along with
its adjacent edges from $G$, then for the resulting graph $G'$ with its degree
sequence $\deg(G')$, it is straightforward to see that $\eta \rmaj \deg(G')$.
From Theorem \ref{thm:forbidden_subsets},
$\fc(\eta) \subseteq \fc(\deg(G'))$ and so any realization of
$\deg(G')$ must contain all the edges in $\fc(\eta)$, specifically
$K_{\alpha_i}$. By adding back
the vertex $v_i$, we see that every realization $G$ will also contain
those edges.
\end{proof}

We can extend this result to show forced cliques based on the minimum
degree value.

\begin{corollary} For the graphic sequence $\alpha$ where $\alpha_1 < n-2$ and 
$\fc(\alpha) \neq \emptyset$, then $\fc(\alpha)$ contains a clique of
size $\alpha_n$.
\label{thm:minforcedclique}
\end{corollary}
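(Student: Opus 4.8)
The plan is to deduce the corollary from Theorem~\ref{thm:forced_clique}, which converts a forbidden edge of a sequence with positive minimum degree into a forced clique whose size equals a vertex degree. If $\alpha_n = 0$ the claimed clique $K_{\alpha_n}$ is empty and there is nothing to prove, so I would assume $\alpha_n \ge 1$. Granting for the moment that $\fb(\alpha) \neq \emptyset$, pick any forbidden edge $\edge{p}{q} \in \fb(\alpha)$; since $\alpha_n > 0$, Theorem~\ref{thm:forced_clique} yields a clique of size $\alpha_p \ge \alpha_n$ inside $\fc(\alpha)$, and any $\alpha_n$ of its vertices form the desired $K_{\alpha_n}$. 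Thus the whole statement reduces to showing that $\alpha$ has at least one forbidden edge, and this is precisely where the hypothesis $\alpha_1 < n-2$ enters.

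To produce a forbidden edge of $\alpha$, I would work in the complement sequence $\bar\alpha$ and invoke Theorem~\ref{thm:forced_clique} there. By Observation~\ref{obs:dual} we have $\fb(\bar\alpha) = \fc(\alpha) \neq \emptyset$, so $\bar\alpha$ has a forbidden edge $\edge{i}{j}$; moreover $\alpha_1 < n-2$ forces $\bar\alpha_n = n-1-\alpha_1 \ge 2$, so every degree of $\bar\alpha$ is at least $2$, which both validates the hypothesis $\bar\alpha_n > 0$ and ensures $\bar\alpha_i \ge 2$. Applying Theorem~\ref{thm:forced_clique} to $\bar\alpha$ therefore returns a clique on $\bar\alpha_i \ge 2$ vertices inside $\fc(\bar\alpha)$; a clique on at least two vertices contains an edge, so $\fc(\bar\alpha) \neq \emptyset$, and since $\fc(\bar\alpha) = \fb(\alpha)$ by Observation~\ref{obs:dual}, we obtain $\fb(\alpha) \neq \emptyset$ as needed.

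The main obstacle is exactly this middle passage, and it is the step most likely to go wrong. One must resist the temptation to apply Theorem~\ref{thm:forced_clique} to $\bar\alpha$ and read its clique directly as a subset of $\fc(\alpha)$: the duality of Observation~\ref{obs:dual} sends the forced edges of $\bar\alpha$ to the forbidden edges of $\alpha$, so the clique it builds lands in $\fb(\alpha)$, not in $\fc(\alpha)$. It is this clique of forbidden edges that merely certifies $\fb(\alpha) \neq \emptyset$, after which a \emph{second}, direct application of Theorem~\ref{thm:forced_clique} to $\alpha$ delivers the forced clique of size $\alpha_n$. The assumption $\alpha_1 < n-2$ does double duty here, supplying both the positive-minimum-degree hypothesis for $\bar\alpha$ and the guarantee that the intermediate clique is nontrivial and so actually contains an edge. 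If one wishes to land on a clique of size exactly $\alpha_n$ rather than one containing it, the forbidden-edge analogue of Theorem~\ref{thm:sets_of_forced_edges} pins the forbidden edge down to $\edge{n-1}{n}$, and applying Theorem~\ref{thm:forced_clique} at its minimum-degree endpoint $v_n$ gives the clique of size $\alpha_n$ directly.
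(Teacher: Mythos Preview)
Your proposal is correct and follows essentially the same two-step strategy as the paper: apply Theorem~\ref{thm:forced_clique} to $\bar\alpha$ (using $\fc(\alpha)=\fb(\bar\alpha)\neq\emptyset$ and $\bar\alpha_n=n-1-\alpha_1\ge 2$) to produce a nontrivial clique in $\fc(\bar\alpha)=\fb(\alpha)$, then apply Theorem~\ref{thm:forced_clique} once more to $\alpha$ itself to obtain the forced $K_{\alpha_n}$. Your version is in fact more carefully justified than the paper's---you handle the trivial case $\alpha_n=0$, explicitly verify the hypothesis $\bar\alpha_n>0$, and flag the duality pitfall---and your closing remark about pinning the forbidden edge to $\edge{n-1}{n}$ via the analogue of Theorem~\ref{thm:sets_of_forced_edges} is exactly the route the paper takes.
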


\begin{proof}
Applying Theorem \ref{thm:forced_clique} to the complement sequence
$\bar{\alpha}$, there must exist a forbidden set of size $n-1-\alpha_2$
in $\alpha$. Since $\alpha_1 < n-2$, then $|\fb(\alpha)| \geq 2$.  Thus
$\alpha_n$ must be in a forbidden edge with $\alpha_{n-1}$.  Then
applying Theorem \ref{thm:forced_clique} again, we arrive that
$\fc(\alpha)$ must contain a clique of size $\alpha_n$.
\end{proof}

We now show that having forced or forbidden edges for a degree sequence limits
the diameter of its realizations.

\begin{theorem} For the graphic sequence $\alpha$ where $\alpha_n \geq 1$, if
$\fc(\alpha) \neq \emptyset$, or $\fb(\alpha) \neq \emptyset$,
then for any realization $G$ of $\alpha$,
\begin{equation}
\diam (G) \leq 3.
\end{equation}
\label{thm:max_diam}
\end{theorem}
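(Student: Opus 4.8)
The plan is to treat the two hypotheses $\fc(\alpha)\neq\emptyset$ and $\fb(\alpha)\neq\emptyset$ separately, reducing the second to the first whenever possible. For the forced case, suppose $\fc(\alpha)\neq\emptyset$. By Theorem~\ref{thm:sets_of_forced_edges} we have $\edge{1}{2}\in\fc(\alpha)$, so $v_1v_2$ is an edge of every realization. Fix a realization $G$ and put $T=V\setminus(N(v_1)\cup N(v_2))$; by Theorem~\ref{thm:forced_struct} this set is independent, and it contains neither $v_1$ nor $v_2$ (both lie in $N(v_1)\cup N(v_2)$ since $v_1v_2$ is an edge). The crux is a swap argument: if some $x\in T$ had a neighbour $u\in N(v_1)\setminus N(v_2)$, then exchanging the edges $v_1v_2,\,xu$ for the non-edges $v_1x,\,v_2u$ would produce a realization of $\alpha$ missing $v_1v_2$, a contradiction; symmetrically no vertex of $T$ has a neighbour in $N(v_2)\setminus N(v_1)$. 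Hence $N(x)\subseteq N(v_1)\cap N(v_2)$ for every $x\in T$, and since $\alpha_n\geq1$ each such $x$ in fact has a neighbour in $N(v_1)\cap N(v_2)$. It follows that every vertex of $G$ lies within distance $2$ of each of $v_1$ and $v_2$ (a neighbour of $v_1$ is at distance $1$; a vertex adjacent to $v_2$ reaches $v_1$ via $v_2$; a vertex of $T$ reaches $v_1$ via its neighbour in $N(v_1)\cap N(v_2)$). Now for $x,y\in V$: if one of $x,y$ is $v_1$, $v_2$, or a neighbour of $v_1$ or of $v_2$, the distance-$2$ bound on the other gives $d(x,y)\leq3$. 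Otherwise $x,y\in T$; choose $a\in N(x)\cap N(v_1)\cap N(v_2)$ and $b\in N(y)\cap N(v_1)\cap N(v_2)$. If $a=b$ or $ab$ is an edge we are done; and if some such $a,b$ are distinct and non-adjacent, then deleting $v_1v_2,\,ax,\,by$ and inserting $v_1x,\,ab,\,v_2y$ is a degree-preserving exchange of three edges producing a realization without $v_1v_2$ — impossible. So $\diam(G)\leq3$.

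For the forbidden case, suppose $\fb(\alpha)\neq\emptyset$. If $\alpha_1\geq n-2$, then in any realization $v_1$ fails to be adjacent to at most one vertex $z$, and since $z$ (if it exists) has a neighbour, which is then adjacent to $v_1$, every vertex lies within distance $2$ of $v_1$, so $\diam(G)\leq3$. Assume then $\alpha_1\leq n-3$. If also $\fc(\alpha)\neq\emptyset$ we are done by the forced case, so assume $\fc(\alpha)=\emptyset$. Since $\fb(\alpha)\neq\emptyset$ we have $\edge{n-1}{n}\in\fb(\alpha)$, and applying Theorem~\ref{thm:forced_clique} to this forbidden edge forces its endpoints to have degree at most $1$, hence (as $\alpha_n\geq1$) exactly $1$; thus $\alpha_{n-1}=\alpha_n=1$, so that $\ominus_{n-1,n}\alpha=(\alpha_1,\dots,\alpha_{n-2},0,0)$ is non-graphic while $\oplus_{1,2}\alpha$ is graphic (the latter because $\fc(\alpha)=\emptyset$). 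Now $\ominus_{n-1,n}\alpha$ violates some Erd\H{o}s--Gallai inequality; the hypothesis $\alpha_1\leq n-3$ rules out a violation at $k=1$, and a direct comparison shows that a violation at any $k\geq2$ forces the corresponding Erd\H{o}s--Gallai inequality to be violated for $\oplus_{1,2}\alpha$ as well, contradicting its graphicality. Hence this sub-case cannot occur, and the theorem follows.

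I expect the forbidden case to be the main obstacle, and within it the delicate point is precisely ruling out the configuration $\fc(\alpha)=\emptyset$, $\fb(\alpha)\neq\emptyset$, $\alpha_1\leq n-3$ via the Erd\H{o}s--Gallai comparison; everything else is the elementary but somewhat fussy swap bookkeeping of the forced case together with the routine near-dominating-vertex estimate. If one prefers to avoid Erd\H{o}s--Gallai, an alternative is to pass to $\bar\alpha$: then $\fc(\bar\alpha)=\fb(\alpha)\neq\emptyset$ and $\bar\alpha_n\geq1$, so the forced case gives $\diam(\bar G)\leq3$ for every realization $G$ of $\alpha$; combining this with the classical fact that a graph of diameter at least $3$ has complement of diameter at most $3$ handles every $G$ with $\diam(\bar G)\geq3$, but the realizations with $\diam(\bar G)\leq2$ must still be dealt with separately, which is essentially the same difficulty in disguise.
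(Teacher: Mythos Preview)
Your argument is correct, with one small presentational slip: in the sub-case $\alpha_1=n-2$ you write ``every vertex lies within distance $2$ of $v_1$, so $\diam(G)\leq3$,'' but distance $2$ to a fixed vertex only gives diameter $\leq4$ in general. The point you actually need (and clearly have in mind) is that at most one vertex is at distance exactly $2$ from $v_1$, so for any pair at least one endpoint is adjacent to $v_1$ and the triangle inequality gives $1+2=3$. With that one sentence added, both cases go through.

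Your route differs from the paper's in both halves. For the forced case the paper partitions $V$ into $\mathcal{P}(\alpha)$, its neighbours $Q$, and the rest $R$, invokes the threshold structure of the forced-edge graph (Theorem~\ref{thm:threshold_structure}), and runs a six-case analysis on which blocks the two vertices lie in. You instead work only with the single forced edge $\edge{1}{2}$, show by a 2-swap that every vertex of $T=V\setminus(N(v_1)\cup N(v_2))$ has all its neighbours in $N(v_1)\cap N(v_2)$, and finish with a clean 3-swap; this is more economical and never needs Theorem~\ref{thm:threshold_structure}. For the forbidden case the paper, like you, uses Theorem~\ref{thm:forced_clique} to reduce to the forced case except when the forbidden edge sits between two degree-$1$ vertices, but it then disposes of that residual configuration by a direct 3-swap on a hypothetical minimal $4$-path. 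You instead rule the configuration out entirely by comparing the Erd\H{o}s--Gallai inequalities for $\ominus_{n-1,n}\alpha$ and $\oplus_{1,2}\alpha$, showing no sequence with $\alpha_1\le n-3$, $\alpha_n\ge1$, $\fb(\alpha)\neq\emptyset$ can have $\fc(\alpha)=\emptyset$. The paper's swap is more self-contained; your Erd\H{o}s--Gallai computation yields the stronger structural byproduct that under these degree constraints a nonempty forbidden set forces a nonempty forced set.
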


\begin{proof}
If $\alpha_1 = n-1$ then trivially $\deg(G) = 2$, thus we will assume that
$\alpha_1 < n-1$.
We begin with a consideration of the case when $\fc(\alpha) \neq \emptyset$.  
by partitioning the set of vertices of $G$ into three sets
where $V = \mathcal{P}(\alpha) \cup Q \cup R$.
We define the set $Q$ as the all the
vertices in $G$ that are adjacent to a vertex in $\mathcal{P}(\alpha)$, but are
not themselves in $\mathcal{P}(\alpha)$.  We next define the set $R$ as
all the remaining vertices, $R = V - \mathcal{P}(\alpha) - Q$.
By performing a case analysis, we show that for any two vertices $v_i$ and
$v_j$ in $G$, there is a path between them of length no greater than 3. 

\begin{description}
\item[Case $\lbrace v_i, v_j \rbrace \subseteq \mathcal{P}(\alpha)$: ]
Theorem \ref{thm:threshold_structure}
says that the forced edges between the vertices in $\mathcal{P}(\alpha)$ form
a connected threshold graph, implying
that the  minimum path length between any two vertices in
$\mathcal{P}(\alpha)$ is no more than 2.

\item[Case $v_i \in Q, v_j \in \mathcal{P}(\alpha)$: ]
From the definition of $Q$ and Theorem
\ref{thm:threshold_structure}, the path length between  $v_i$ and $v_j$
is no more than 3.

\item[Case $\lbrace v_i, v_j \rbrace \subseteq Q$: ]
Let $v_k \in N(v_i)$ and $v_l \in N(v_j)$ where $\lbrace v_k, v_l
\rbrace \subseteq \mathcal{P}(\alpha)$.
If $v_k = v_l$ or the edge $\edge{k}{l} \in E$, then we have
found a path of length no more than 3 between $v_i$ and $v_j$.
Else, from Theorem \ref{thm:threshold_structure} we can
find a path with length 2 composed of forced edges from $v_k$ to $v_l$;
let us assume that the path goes through $v_m$.
If the edge $\edge{i}{m} \not\in E$ then
we could replace the edges $\lbrace \edge{i}{k}, \edge{l}{m} \rbrace$
with $\lbrace \edge{i}{m}, \edge{k}{l}\rbrace$
violating the assumption that $\edge{l}{m} \in \fc(\alpha)$.  A similar argument
establishes that $\edge{j}{m}$ must also be in $G$, giving a
path of length 2 from $v_i$ to $v_j$ through $v_m$. 

\item[Case $v_i \in R, v_j \in \mathcal{P}(\alpha)$: ]
From Theorem \ref{thm:forced_struct}, since $N(v_i) \subseteq Q$, 
then any vertex $v_k \in N(v_i)$ that we choose will be in $Q$.
Now select two vertices $\lbrace v_m, v_n \rbrace \subseteq
\mathcal{P}(\alpha)$ such that $v_m \in N(v_k)$ and $\edge{m}{n} \in
\fc(\alpha)$.  We first note that $G$ also must contain the edge $\edge{k}{n}$,
because if $\edge{k}{n}$ did not exist
then we could replace the edges
$\lbrace \edge{i}{k}, \edge{m}{n} \rbrace$ in $G$ with the set  
$\lbrace \edge{i}{m}, \edge{k}{n} \rbrace$ violating the assumption that
$\edge{m}{n} \in \fc(\alpha)$.
Now because all the forced edges are connected, we can inductively extend this
argument to show that
every forced edge must be in a triangle with $v_k$.  Thus $v_i$ can
reach any vertex $v_j \in \mathcal{P}(\alpha)$ with a path of length 2,

\item[Case $v_i \in R, v_j \in Q$:]
The argument for proceeding case shows that $v_i$ can reach any vertex in
$Q$ with a
path of no more than length 3 by going through some vertex in
$\mathcal{P}(\alpha)$.

\item[Case $\lbrace v_i, v_j \rbrace \subseteq R$: ]
Choose two vertices $v_k \in N(v_i)$ and $v_l \in N(v_j)$ and an edge
$\edge{m}{n} \in \fc(\alpha)$. 
If $v_k = v_l$ then we found a path of length 2.
If not then the edge $\edge{k}{l}$ must be in $E$, or else 
we could replace the edges in $\lbrace \edge{i}{k},
\edge{j}{l}, \edge{m}{n}\rbrace$ with $\lbrace \edge{k}{l}, \edge{i}{m},
\edge{j}{n} \rbrace$ violating the assumption that $\edge{m}{n} \in
\fc(\alpha)$.
Thus there is a path of no more than length 3 between $v_i$ and $v_j$.

\end{description}

For the second part of the statement when $\alpha_n \geq 1$ and
$\fb(\alpha) \neq 0$, 
we note that Theorem \ref{thm:forced_clique} coupled with the proof of the first
part of Theorem \ref{thm:max_diam} is almost enough to prove the second
part; it only fails when except when the forbidden edges are strictly between
vertices of degree 1.
To show the complete statement, assume that $\edge{m}{n} \in \fb(\alpha)$.
Since $v_m$ and $v_n$ are not isolated, then we choose the vertices
$v_p \in N(v_m)$ and $v_q \in N(v_n)$ where $v_p$ and $v_q$ are not
necessarily distinct. 
Theorem \ref{thm:forced_struct} says that for every realization 
$G$ of $\alpha$, the vertices in $N(m) \cup N(n)$ form a clique.
If the diameter of the graph is greater than 3, then there would have to exist
a minimal 4-path in $G$ between two vertices $v_i$ and $v_j$. Without a loss of
generality, we can assume that neither the vertex $v_i$ nor its neighbor in
that path $v_k$ is in $N(m) \cup N(n)$, or else we could find a 3-path
from $v_i$ to $v_j$. 
But we could replace the edges in
$\lbrace \edge{i}{k}, \edge{m}{p}, \edge{n}{q} \rbrace$ with 
$\lbrace \edge{m}{n}, \edge{i}{p}, \edge{k}{q} \rbrace$ violating the
assumption that $\edge{m}{n} \in \fb(\alpha)$.  
\end{proof}

We now examine the edge connectivity of a graph whose degree sequence
contains either a forced or forbidden edge. 
The edge connectivity $\lambda(G)$ is 
the minimum cardinality of an edge-cut over all edge-cuts of $G$.
There is a trivial upper bound for $\lambda(G) \leq \alpha_n$ where
$\alpha = \deg(G)$.  When a graph $G$ has this edge connectivity of
$\lambda(G) = \alpha_n$, then it is said to be maximally edge-connected. 
Any realization of a degree sequence with either forced or forbidden edges is
maximally edge-connected.

\begin{theorem} For the graphic sequence $\alpha$ where $\alpha_n \geq 1$,
if $\fb(\alpha) \neq \emptyset$ or $\fc(\alpha) \neq \emptyset$,
then for any realization $G$ of $\alpha$,
\begin{equation}
\lambda(G) = \alpha_n.
\end{equation}
\end{theorem}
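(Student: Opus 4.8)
The plan is to assume, for contradiction, that some realization $G$ of $\alpha$ has a minimum edge cut $[S,\bar S]$ with $\lambda:=|[S,\bar S]|<\alpha_n$, and to convert this into a realization of $\alpha$ that omits a forced edge. First I would dispose of the reductions. If $\fb(\alpha)\neq\emptyset$, choose any forbidden edge $\edge{i}{j}$; since $\alpha_i\ge\alpha_n$, either $\alpha_n\ge 2$ and Theorem~\ref{thm:forced_clique} produces a forced edge, reducing us to the case $\fc(\alpha)\neq\emptyset$, or $\alpha_n=1$, in which case being maximally edge-connected merely means being connected --- which follows from $\diam(G)\le 3$ (Theorem~\ref{thm:max_diam}). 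So assume henceforth $\fc(\alpha)\neq\emptyset$ and $\alpha_n\ge 2$. A routine degree count (comparing $\sum_{v\in S}\deg v$ with $\alpha_n|S|$) forces $|S|,|\bar S|\ge\alpha_n+1$. If $\alpha_1\ge n-2$, then in $G$ some vertex $v_1$ misses at most one other vertex; placing $v_1$ in $S$, at least $|\bar S|-1\ge\alpha_n$ vertices of $\bar S$ are joined to $v_1$, giving $\alpha_n$ distinct cut edges, a contradiction. Hence I also assume $\alpha_1\le n-3$.

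Under these hypotheses Corollary~\ref{thm:minforcedclique} gives, in every realization, a clique $K$ on $\alpha_n$ vertices all of whose edges lie in $\fc(\alpha)$. With $s=|K\cap S|$, the clique contributes $s(\alpha_n-s)$ edges to the cut, so $s(\alpha_n-s)\le\lambda<\alpha_n$ forces $\min(s,\alpha_n-s)\le 1$: either $K$ lies entirely on one side of the cut, or exactly one vertex of $K$ is separated from the other $\alpha_n-1$. Let $T$ be a side with $|K\cap T|\ge\alpha_n-1$, and put $K'=K\cap T$, $T'=V\setminus T$; then $|T'|\ge\alpha_n+1$ while at most $\lambda\le\alpha_n-1$ vertices of $T'$ are incident to a cut edge.

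The core step is then as follows, and it works in every case except ``$\alpha_n=2$ with $K$ split'': since $|K'|\ge 2$ in all other cases, and since $T'$ has a vertex $v_e$ incident to no cut edge (as $|T'|>\lambda$), we get $N(v_e)\subseteq T'$ with $|N(v_e)|\ge\alpha_n$; because only $\le\alpha_n-1$ vertices of $T'$ meet cut edges, $v_e$ must have a neighbour $v_f$ that also meets no cut edge. Neither $v_e$ nor $v_f$ is then adjacent to any vertex of $T$, so for two distinct $v_a,v_b\in K'$ the edge switch replacing $\{\edge{a}{b},\edge{e}{f}\}$ with $\{\edge{a}{e},\edge{b}{f}\}$ is valid and yields a realization of $\alpha$ lacking the forced edge $\edge{a}{b}$ --- the contradiction we want.

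It remains to treat $\alpha_n=2$ with $K=\{v_x,v_y\}$ split by the cut, so $\lambda=1$ and the forced edge $\edge{x}{y}$ is itself a bridge, with $G-\edge{x}{y}$ splitting into components $G_1\ni v_x$ and $G_2\ni v_y$; since $v_x,v_y$ have degree $\ge 2$, each $G_i$ has at least three vertices and every vertex of $G_i$ other than $v_x$ (resp.\ $v_y$) keeps all of its neighbours inside $G_i$. Choosing $v_c\in V(G_2)\setminus\{v_y\}$ with a neighbour $v_e\neq v_y$, and symmetrically $v_d\in V(G_1)\setminus\{v_x\}$ with a neighbour $v_f\neq v_x$, we find $\edge{x}{c},\edge{y}{d},\edge{e}{f}\notin E(G)$, so replacing $\{\edge{x}{y},\edge{c}{e},\edge{d}{f}\}$ by $\{\edge{x}{c},\edge{y}{d},\edge{e}{f}\}$ preserves every degree and destroys the forced edge $\edge{x}{y}$ --- again a contradiction. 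I expect the main obstacle to be precisely this case bookkeeping: checking that the forced clique must sit across any small cut in one of these few ways, and noticing that a single switch handles all of them except the degenerate bridge case, which needs the three-edge switch above.
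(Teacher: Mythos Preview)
Your argument is correct and follows the same high-level strategy as the paper: assume a cut of size $\lambda<\alpha_n$, locate a forced clique relative to the two sides, and exhibit an edge switch that destroys a forced edge. The organization differs in a few respects worth noting. First, you reduce the forbidden case to the forced case via Theorem~\ref{thm:forced_clique}, whereas the paper goes the other way (reducing $\fc\neq\emptyset$ to $\fb\neq\emptyset$ through the complement); your direction lets you work entirely with forced edges. Second, the paper invokes Plesn\'ik's theorem to dispose of the diameter-$\le 2$ situation up front, while you never need it: your counting argument $s(\alpha_n-s)<\alpha_n$ for the placement of the forced $\alpha_n$-clique across the cut, together with the explicit construction of the non-cut edge $\edge{e}{f}$ in $T'$, handles all cases uniformly. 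Third, the paper's key structural lemma is that \emph{every} forced edge must lie in the cut (whence a forced $K_3$ is immediately impossible), which is a cleaner qualitative statement than your quantitative clique-splitting bound, though both reach the same endpoint. Finally, the residual $\alpha_n=2$ bridge case is dispatched by a figure-based case analysis in the paper and by your single three-edge switch; your version is more explicit and avoids the picture bookkeeping. Overall the two proofs are close cousins, with yours slightly more self-contained (no Plesn\'ik, no complement detour) and the paper's slightly more conceptual in its ``forced edges cross the cut'' step.
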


\begin{proof}
We begin with some simple observations about what is required for a
graph to be maximally edge-connected.  If $\alpha_n = 1$, then for the
connected graph $G$, $\lambda(G) = \alpha_n$ is trivially true;
thus we assume that $\alpha_n \geq 2$.  In addition,
a result by Plesn\'ik~\cite{Plesnik:1975} establishes that if $\diam(G)
\leq 2$, then is maximally edge-connected.  Thus, from Theorem
\ref{thm:max_diam}, if $G$ is not maximally edge-connected,
then $\diam(G) = 3$.

For a contradiction, we assume that there is a realization of $G$ where
$\lambda(G) < \alpha_n$.
We denote the edge set $S$ as an arbitrary minimum edge-cut of $G$, and
the two components of $G$ with $S$ removed as $P$ and $Q$.  For each set
$P$ and $Q$, we partition each into two sets, $P = P_s \cup P_n$ (or
$Q = Q_s \cup Q_n$), where $P_s$ (or $Q_s$) is the set of vertices  in
$P$ (or $Q$)
with an adjacent edge in $S$, and $P_n$ (or $Q_n$) are the remaining vertices.

Using an argument first presented by Hellwig and Volkmann \cite{Hellwig:2008},
we show that $|P_n| \geq 2$. From the assumption that
$\lambda(G) \leq \alpha_n - 1$, then
\begin{equation}
\alpha_n |P| \leq \sum_{p \in P} \deg(p) \leq |P|(|P|-1) + \alpha_n -1,
\end{equation}
which implies that $|P| \geq \alpha_n + 1$.  Along with the assumption
that $|P_s| \leq \lambda(G) \leq \alpha_n -1$, it follows that
$|P_n| = |P| - |P_s| \geq 2$. There is a similar argument to show that
$|Q_n| \geq 2$ also.  One implication from this  result is that since $G$ is
not maximally edge-connected, then $\alpha_1 \leq n-3$.

We now show that if $\edge{i}{j} \in \fc(\alpha)$,
then $v_i$ and $v_j$ must be in separate components.
Suppose that $\lbrace v_i,v_j \rbrace \subseteq Q$, then
from the proceeding argument, there must be at least one edge
$\edge{k}{l}$ strictly in $P$.  This edge $\edge{k}{l}$
would allow us to replace $\lbrace \edge{i}{j}, \edge{k}{l} \rbrace$ with
$\lbrace \edge{i}{k}, \edge{j}{l} \rbrace$
violating the assumption that $\edge{i}{j}$ is forced; thus, each forced
edge must be in $S$.
Extending this observation shows that if $K_3 \subseteq \fc(\alpha)$, we would
have a contradiction with $G$ not being maximally edge-connected.

\begin{figure}
\begin{center}
\begin{subfigure}[b]{.3\textwidth}
\begin{tikzpicture}
\newcommand{\background}[5]{%
  \begin{pgfonlayer}{background}
    \path (#1.west |- #2.north)+(-0.2,0.2) node (a1) {};
    \path (#3.east |- #4.south)+(+0.2,-0.1) node (a2) {};
    \path[fill=black!30,rounded corners, draw=black!50, dashed]
      (a1) rectangle (a2);
    \path (a1.east |- a1.south)+(0.8,-0.2) node (u1) {\scriptsize\textit{$#5$}};
  \end{pgfonlayer}}
\begin{scope}[every node/.style={circle,draw,fill=white,minimum size=1mm,inner sep=1pt}]
\node (i) at (1,1) {$i$};
\node (j) at (2,1) {$j$};
\node (k) at (0,2) {$k$};
\node (l) at (0,0) {$l$};
\node (p) at (3,1) {$p$};
\end{scope}
\draw (i) -- (j);
\draw (j) -- (p);
\draw (k) -- (l);
\draw (k) -- (i);
\draw (l) -- (i);
\background{k}{k}{i}{l}{P}
\background{j}{k}{p}{l}{Q}
\end{tikzpicture}
\caption{Replace  $\lbrace \edge{k}{l}, \edge{i}{j}, \edge{j}{p} \rbrace$
	with $\lbrace \edge{k}{j}, \edge{j}{l}, \edge{i}{p} \rbrace$ }
\label{fig:case1}
\end{subfigure}
\hspace{4mm}
\begin{subfigure}[b]{.3\textwidth}
\begin{tikzpicture}
\newcommand{\background}[5]{%
  \begin{pgfonlayer}{background}
    \path (#1.west |- #2.north)+(-0.2,0.2) node (a1) {};
    \path (#3.east |- #4.south)+(+0.2,-0.1) node (a2) {};
    \path[fill=black!30,rounded corners, draw=black!50, dashed]
      (a1) rectangle (a2);
    \path (a1.east |- a1.south)+(0.8,-0.2) node (u1) {\scriptsize\textit{$#5$}};
  \end{pgfonlayer}}
\begin{scope}[every node/.style={circle,draw,fill=white,minimum size=1mm,inner sep=1pt}]
\node (i) at (1,1) {$i$};
\node (j) at (2,1) {$j$};
\node (k) at (0,2) {$k$};
\node (l) at (0,0) {$l$};
\node (p) at (3,1) {$p$};
\end{scope}
\draw (i) -- (j);
\draw (j) -- (p);
\draw (k) -- (l);
\draw (k) -- (i);
\background{k}{k}{i}{l}{P}
\background{j}{k}{p}{l}{Q}
\end{tikzpicture}
\caption{Replace $\lbrace \edge{k}{l}, \edge{i}{j}
\rbrace$ with $\lbrace \edge{k}{j}, \edge{i}{l} \rbrace$ }
\label{fig:case2}
\end{subfigure}
\hspace{5mm}
\begin{subfigure}[b]{.3\textwidth}
\begin{tikzpicture}
\newcommand{\background}[5]{%
  \begin{pgfonlayer}{background}
    \path (#1.west |- #2.north)+(-0.2,0.2) node (a1) {};
    \path (#3.east |- #4.south)+(+0.2,-0.1) node (a2) {};
    \path[fill=black!30,rounded corners, draw=black!50, dashed]
      (a1) rectangle (a2);
    \path (a1.east |- a1.south)+(0.8,-0.2) node (u1) {\scriptsize\textit{$#5$}};
  \end{pgfonlayer}}
\begin{scope}[every node/.style={circle,draw,fill=white,minimum size=1mm,inner sep=1pt}]
\node (i) at (1,1) {$i$};
\node (j) at (2,1) {$j$};
\node (k) at (0,2) {$k$};
\node (l) at (0,0) {$l$};
\node (p) at (3,1) {$p$};
\end{scope}
\draw (i) -- (j);
\draw (j) -- (p);
\draw (k) -- (l);
\background{k}{k}{i}{l}{P}
\background{j}{k}{p}{l}{Q}
\end{tikzpicture}
\caption{Replace $\lbrace \edge{k}{l}, \edge{i}{j} \rbrace$ with
$\lbrace \edge{k}{j}, \edge{i}{l} \rbrace$ }
\label{fig:case3}
\end{subfigure}
\end{center}
\caption{The three possible cases when $\edge{k}{l} \in E$ and $\lbrace
	k,l \rbrace \subseteq P_n$.  In all three cases, the edge
$\edge{i}{j}$ is not forced causing a contradiction.}
\label{fig:edge_pn_cases}
\end{figure}

Let us consider the case where the forbidden edge set for $\alpha$ is not
empty, $\fb(\alpha) \neq \emptyset$.
From Theorem \ref{thm:forced_clique}, since $\alpha_n \geq 2$,
then $G$ has a clique of $\alpha_n$ in $\fc(\alpha)$, and so
if $\alpha_n > 3$, then $K_3 \in \fc(\alpha)$ proving that $G$ is not
maximally edge-connected.  Thus the only possible case for $\alpha$ not
covered by this result is when
$\alpha_n = 2$ and the resulting forced edge $\edge{i}{j}$ makes up the set $S$.
Assuming that $\lbrace v_k, v_l \rbrace \subseteq P_n$ and $v_p \in Q_n$, then
if there would exist an edge between $v_k$ and $v_l$ the induced subgraph
$G[\lbrace v_i, v_j, v_k, v_l, v_p \rbrace]$ would be one of the three cases in
Figure \ref{fig:edge_pn_cases}. Since in all three cases the edge
$\edge{i}{j}$ is not forced, then the edge $\edge{k}{l}$ cannot
exist.  This means that in general that any vertex in $P_n$
(or $Q_n$) must be connected to members of $P_s$ (or $Q_s$) only,
and specifically, in this case, $\deg(v_k) = \deg(v_l)= 1$.
This is a contradiction to $\alpha_n \geq 2$, and thus
it  follows that if $\fb(\alpha) \neq \emptyset$, then $\lambda(G) = \alpha_n$.

When the forced edge set is not empty, we again use Theorem
\ref{thm:forced_clique}, this time on the complement sequence
$\bar{\alpha}$, to show that
forbidden edge set $\fb(\alpha)$ has clique of size $n-1-\alpha_1$.
Since $\alpha_1 \leq n-3$, then the forbidden edge set is not empty,
and thus to avoid a contradiction, then $G$ must be maximally edge-connected.
\end{proof}

{ \singlespacing
\bibliographystyle{nist}
\bibliography{forced_diameter}

\begin{thebibliography}{10}
\newcommand{\enquote}[1]{``#1''}
\providecommand{\url}[1]{\texttt{#1}}
\providecommand{\urlprefix}{ }
\providecommand{\eprint}[2][]{\url{#2}}

\bibitem{Greenhill:2015}
C.~Greenhill (2015).
\newblock \enquote{The switch {M}arkov chain for sampling irregular graphs:
  {E}xtended abstract}.
\newblock In Proceedings of the Twenty-Sixth Annual ACM-SIAM Symposium on
  Discrete Algorithms, SODA '15, pp. 1564--1572. SIAM.
\newblock \urlprefix\url{http://dl.acm.org/citation.cfm?id=2722129.2722232}.

\bibitem{Blitzstein:2010}
J.~Blitzstein and P.~Diaconis (2010).
\newblock \enquote{A sequential importance sampling algorithm for generating
  random graphs with prescribed degrees}.
\newblock \emph{Internet Mathematics} \textbf{6}~(4), 489.
\newblock \urlprefix\url{http://dx.doi.org/10.1080/15427951.2010.557277}.

\bibitem{Busch:2012}
A.~H. Busch, M.~J. Ferrara, S.~G. Hartke, M.~S. Jacobson, H.~Kaul, and D.~B.
  West (2012).
\newblock \enquote{Packing of graphic {$n$}-tuples}.
\newblock \emph{J. Graph Theory} \textbf{70}~(1), 29--39.
\newblock \urlprefix\url{http://dx.doi.org/10.1002/jgt.20598}.

\bibitem{Yin:2016}
J.-H. Yin (2016).
\newblock \enquote{A note on packing of graphic {$n$}-tuples}.
\newblock \emph{Discrete Math.} \textbf{339}~(1), 132--137.
\newblock \urlprefix\url{http://dx.doi.org/10.1016/j.disc.2015.07.017}.

\bibitem{Mahadev:1995}
N.~V.~R. Mahadev and U.~N. Peled (1995).
\newblock Threshold graphs and related topics, \emph{Annals of Discrete
  Mathematics}, vol.~56.
\newblock North-Holland Publishing Co., Amsterdam.

\bibitem{Fulkerson:1962}
D.~R. Fulkerson and H.~J. Ryser (1962).
\newblock \enquote{Multiplicities and minimal widths for {$(0,\,1)$}-matrices}.
\newblock \emph{Canad. J. Math.} \textbf{14}, 498--508.
\newblock \urlprefix\url{http://dx.doi.org/10.4153/CJM-1962-041-9}.

\bibitem{Ruch:1979}
E.~Ruch and I.~Gutman (1979).
\newblock \enquote{The branching extent of graphs}.
\newblock \emph{Journal of Combinatorics, Information, \& System Sciences}
  \textbf{4}~(4), 285--295.

\bibitem{Kleitman:1973}
D.~Kleitman and D.~Wang (1973).
\newblock \enquote{Algorithms for constructing graphs and digraphs with given
  valences and factors}.
\newblock \emph{Discrete Mathematics} \textbf{6}~(1), 79--88.
\newblock \urlprefix\url{http://dx.doi.org/10.1016/0012-365X(73)90037-X}.

\bibitem{Kundu:1973}
S.~Kundu (1973).
\newblock \enquote{The {$k$}-factor conjecture is true}.
\newblock \emph{Discrete Mathematics} \textbf{6}~(4), 367--376.
\newblock \urlprefix\url{http://dx.doi.org/10.1016/0012-365X(73)90068-X}.

\bibitem{Zverovich:1992}
I.~E. Zverovich and V.~E. Zverovich (1992).
\newblock \enquote{Contributions to the theory of graphic sequences}.
\newblock \emph{Discrete Mathematics} \textbf{105}~(1-3), 293--303.
\newblock \urlprefix\url{http://dx.doi.org/10.1016/0012-365X(92)90152-6}.

\bibitem{Chvatal:1977}
V.~Chv\'atal and P.~L. Hammer (1977).
\newblock \enquote{Aggregation of inequalities in integer programming}.
\newblock In P.~Hammer, B.~H. Korte, G.~L. Nemhauser, and E.~L. Johnson, eds.,
  Annals of Discrete Mathematics, \emph{Studies in Integer Programming},
  vol.~1, pp. 145--162. Elsevier.
\newblock \urlprefix\url{http://dx.doi.org/10.1016/S0167-5060(08)70731-3}.

\bibitem{Brylawski:1973}
T.~Brylawski (1973).
\newblock \enquote{The lattice of integer partitions}.
\newblock \emph{Discrete Mathematics} \textbf{6}~(3), 201--219.
\newblock \urlprefix\url{http://dx.doi.org/10.1016/0012-365X(73)90094-0}.

\bibitem{Barrus:2015}
M.~D. Barrus (2015).
\newblock \enquote{Adjacency relationships forced by a degree sequence}.
\newblock \eprint{arxiv:1508.00045}.

\bibitem{Plesnik:1975}
J.~Plesn{\'{\i}}k (1975).
\newblock \enquote{Critical graphs of given diameter}.
\newblock \emph{Acta Fac. Rerum Natur. Univ. Comenian. Math.} \textbf{30},
  71--93.

\bibitem{Hellwig:2008}
A.~Hellwig and L.~Volkmann (2008).
\newblock \enquote{The connectivity of a graph and its complement}.
\newblock \emph{Discrete Appl. Math.} \textbf{156}~(17), 3325--3328.
\newblock \urlprefix\url{http://dx.doi.org/10.1016/j.dam.2008.05.012}.

\end{thebibliography}
}

\end{document}